\date{16 (29) August 2018}  %% date of last modification
\author{Hovhannes~Khudaverdian}
\author{Theodore~Voronov}
\address{School of Mathematics, University of Manchester, Manchester, M13 9PL, UK}
\email{khudian@manchester.ac.uk}
\address{{School of Mathematics,  University of Manchester,    Manchester,   M13 9PL,  UK}}
\email{theodore.voronov@manchester.ac.uk}
\address{%
{Faculty of Physics, Tomsk State University, Tomsk, 634050, Russia}}
\title[Thick morphisms,  higher Koszul brackets, and $\Linf$-algebroids]{Thick morphisms,  higher Koszul brackets, and $\Linf$-algebroids}
\newcommand{\ted}[1]{\textcolor{blue}{#1}}  
\newtheorem{theorem}{Theorem}
\newtheorem{proposition}{Proposition}
\newtheorem{corollary}{Corollary}
\theoremstyle{definition}
\newtheorem{definition}{Definition}%[section]
\newtheorem{example}{Example}%[section]
\newtheorem{remark}{Remark}%[section]
\def\co{\colon\thinspace}
\renewcommand{\geq}{\geqslant}
\newcommand{\infto}{\rightsquigarrow}
\DeclareMathOperator{\fun}{\mathit{C^{\infty}}}
\DeclareMathOperator{\funn}{\mathbf{C^{\infty}}}
\DeclareMathOperator{\pfunn}{\mathbf{\Pi\!C^{\infty}}}
 \DeclareMathOperator{\sign}{sgn}
\newcommand{\der}[2]{{\frac{\partial {#1}}{\partial {#2}}}}
\newcommand{\lder}[2]{{\partial {#1}/\partial {#2}}}
\newcommand{\dder}[3]{{\frac{\partial^2 {#1}}{\partial {#2}\partial {#3}}}}
\newcommand{\Z}{{\mathbb Z_{2}}}
\newcommand{\ZZ}{{\mathbb Z}}
\renewcommand{\a}{\alpha}
\renewcommand{\b}{\beta}
\newcommand{\e}{\varepsilon}
\newcommand{\s}{\sigma}
\newcommand{\f}{{\varphi}}
\renewcommand{\O}{\Omega}
\renewcommand{\o}{\omega}
\newcommand{\g}{{\gamma}}
\newcommand{\h}{\eta}
\newcommand{\io}{\iota}
\newcommand{\F}{{\Phi}}
\newcommand{\la}{{\lambda}}
\newcommand{\x}{{\xi}}
\renewcommand{\d}{\delta}
\renewcommand{\L}{{\Lambda}}
\newcommand{\ft}{{\tilde f}}
\newcommand{\jt}{{\tilde j}}
\newcommand{\itt}{{\tilde \imath}}
\newcommand{\at}{{\tilde a}}
\newcommand{\bt}{{\tilde b}}
\newcommand{\ct}{{\tilde c}}
\newcommand{\ut}{{\tilde u}}
\newcommand{\vt}{{\tilde v}}
\newcommand{\wt}{{\tilde w}}
\newcommand{\Xt}{{\tilde X}}
\newcommand{\At}{{\tilde A}}
\newcommand{\Bt}{{\tilde B}}
\newcommand{\Ht}{{\tilde H}}
\newcommand{\Pt}{{\tilde P}}
\newcommand{\Rt}{{\tilde R}}
\newcommand{\lat}{{\tilde \lambda}}
\newcommand{\La}{\Lambda}
\DeclareMathOperator{\Vect}{\mathrm{Vect}}
\DeclareMathOperator{\Mult}{\mathfrak{A}}
\newcommand{\lsch}{{[\![}}
\newcommand{\rsch}{{]\!]}}
\newcommand{\Li}{L_{\infty}}
\newcommand{\Pin}{P_{\infty}}
\newcommand{\Si}{S_{\infty}}
\newcommand{\Sinf}{S_{\infty}}
\newcommand{\Pinf}{P_{\infty}}
\newcommand{\Linf}{L_{\infty}}
\newcommand{\att}{{\tilde{\a}}}
\newcommand{\btt}{{\tilde{\b}}}
\newcommand{\gtt}{{\tilde{\g}}}
\newcommand{\tto}{{\linethickness{2pt}
		  \,\begin{picture}(1,0)
                   \put(0,0.26){\line(1,0){0.95}}
                   \put(0,0){$\boldsymbol{\rightarrow}$}
                   %\put(0,0){$\hphantom{\boldsymbol{\to}}$}
                  \end{picture}
                  }\,
}
\newcommand{\oto}{{\linethickness{0.5pt}
		  \,\begin{picture}(1,0)
		  \put(0.07,0.175){\line(0,1){0.2}}
		  %put(0.01,0.172){\line(0,1){0.2}}
		  % \put(-0.05,0.175){\line(1,0){0.8}}
		   %\put(-0.05,0.38){\line(1,0){0.8}}
                   %\put(0,0){${\rightarrow}$}
                   \put(-0.01,0){$\boldsymbol{\Rightarrow}$}
                   %\put(0,0){$\hphantom{\boldsymbol{\to}}$}
                  \end{picture}
                  }\,
}
\newcommand{\specto}{\rightsquigarrow}
\newcommand{\mul}{P}
\newcommand{\Sch}{\mathfrak{S}}
\newcommand{\Poi}{\mathfrak{P}}
\newcommand{\void}{\varnothing}
\newcommand{\kir}{\boldsymbol{\kappa}}
\begin{document}

\begin{abstract}
It is a classical fact in Poisson geometry that the cotangent bundle of a
Poisson manifold has the structure of a Lie algebroid. Manifestations
of this structure are the Lichnerowicz differential on multivector fields (calculating Poisson cohomology) and  the  Koszul bracket  of differential forms. ``Raising indices'' by the Poisson tensor maps the de Rham differential to the  Lichnerowicz differential and the Koszul bracket to the Schouten bracket. In this paper, we present a homotopy analog of the above results. When an ordinary Poisson structure is replaced by a homotopy one, instead of a single Koszul bracket there arises an infinite sequence of ``higher Koszul brackets'' 
defining an $L_{\infty}$-algebra structure on forms (Khudaverdian--Voronov  \href{https://arxiv.org/abs/0808.3406}{\texttt{arXiv:0808.3406}}). We show how to construct a non-linear transformation, which is an $L_{\infty}$-morphism, from this $L_{\infty}$-algebra %of differential forms
to the Lie superalgebra of multivector fields with the canonical Schouten bracket. This is done by using the new notion of ``thick morphisms'' of supermanifolds recently introduced (see \href{http://arxiv.org/abs/1409.6475}{\texttt{arXiv:1409.6475}} and \href{https://arxiv.org/abs/1411.6720}{\texttt{arXiv:1411.6720}}).  %We also give an analog of this construction in the abstract framework  of quasi-triangular $\Li$-bialgebroids.
\end{abstract}

\maketitle

\tableofcontents

\section{Introduction}
It is a classical fact of Poisson geometry that a Poisson structure on a manifold $M$ endows the algebra of differential forms on $M$ with a bracket operation whose properties are similar to those of the canonical Schouten   bracket on multivector fields. Under this operation and the usual multiplication, the space of differential forms becomes an odd Poisson algebra. It was first introduced by Koszul~\cite{koszul:crochet85} and is known as the \emph{Koszul bracket}. (Kosmann-Schwarzbach and Magri~\cite{yvette:magri-pn1990}, who  gave for for it an alternative construction refer  to it as the `Koszul--Schouten bracket'.)  The Koszul bracket is odd in the sense of parity and also has degree $-1$ with respect to the usual grading of forms. In particular, the space of $1$-forms is closed under this bracket and thus forms a Lie algebra. As pointed out in~\cite[footnote 5 on p. 6]{yvette:modular2008},  such a bracket of $1$-forms was defined (in the symplectic case) already in Abraham--Marsden~\cite{abraham:marsden1967}; it was re-discovered in the context of soliton theory by~Fuchssteiner~\cite{fuchssteiner82}, Dorfman~\cite{dorfman:1984},  Daletsky~\cite{daletsky:1984},  and Magri--Morosi~\cite{magri:morosi1984} (see  also  Dorfman~\cite[\S 2.7]{dorfman:book}).

Besides the Koszul bracket, a Poisson structure $P$ on a manifold $M$ defines another geometric object, the \emph{Lichnerowicz differential} $d_P$ discovered in~\cite{lichner:poisson1977}. It acts on multivector fields by the formula $d_P :=\lsch P,-\rsch$, where $P$ is the Poisson tensor and $\lsch -,-\rsch$ denotes the canonical Schouten bracket.

The crucial fact concerning   the Koszul bracket and Lichnerowicz differential is as follows (see e.g. Karas\"{e}v--Maslov~\cite[Ch.~I,Thm.~2.6]{karasev:maslov1991}). Consider the   diagram
\begin{equation}\label{eq.diagram}
    \begin{CD} \Mult^k (M)@>{d_P}>> \Mult^{k+1}(M)\\
                @A{P^{\#}}AA         @AA{P^{\#}}A\\
                \O^k(M)@>{d}>> \O^{k+1}(M) \,.
    \end{CD}
\end{equation}
where the vertical arrows ${P^{\#}}$ are   linear maps given by `raising indices' with the help of  the Poisson tensor $P$. We denote by    $\Mult^k(M)$   the space  of multivector fields on
$M$ of degree $k$.
Then: (a) the diagram is commutative, so that the de Rham differential on forms is transformed to the Lichnerowicz differential on multivector fields; and, (b) the Koszul bracket on forms is mapped to the   Schouten bracket on multivector fields.
(Note that here we have   two canonical structures on $M$, the de Rham differential and the Schouten bracket, and  two structures defined by a Poisson tensor, the Lichnerowicz differential and the Koszul bracket. The maps ${P^{\#}}$ exchange them crosswise.) In fact, the equality
\begin{equation}
    P^{\#}[\o,\s]_P = \lsch P^{\#}(\o),P^{\#}(\s)\rsch\,,
\end{equation}
where $[-,-]_P$ denotes the Koszul bracket of forms, can be used as its definition: if one assumes the invertibility of  $P^{\#}$, i.e. that the structure is symplectic, then one may set
\begin{equation*}
    [\o,\s]_P := (P^{\#})^{-1}\lsch P^{\#}(\o),P^{\#}(\s)\rsch\,,
\end{equation*}
and then the calculation shows that the inverse matrix $P_{ab}$ to the Poisson tensor $P^{ab}$ disappears from the explicit formula~\cite{tv:higherpoisson}.

In this paper, we are concerned with the \textbf{homotopy analog} of the above. One obtains a `homotopy relaxing' of a Poisson algebra structure if the usual Jacobi identity is satisfied only up to an exact term or more precisely, if its left-hand side is only homotopic to zero (in the algebraic sense). Such a situation may occur   when a Poisson bracket is lifted from homology classes to the chain level. We give precise definitions in the next section, but here it is sufficient to say that a \emph{homotopy Poisson structure},
instead of a single Poisson bracket with two arguments as in the usual case, consists of a possibly infinite sequence of (multi)brackets $\{-,\ldots,-\}$ with $n$ arguments, $n=0,1,2,3,\ldots$, which are multilinear, antisymmetric and satisfy an infinite sequence of  `generalized Jacobi identities'. In particular, the unary bracket $\{-\}$ is a differential in the sense of homological algebra (more precisely, it is so if the $0$-bracket vanishes), and in the generalized Jacobi identity with three arguments, the ternary bracket plays the role of an algebraic homotopy. Other higher brackets are interpreted as `higher homotopies'. For this to make proper sense, one should assume that $M$ is a supermanifold  instead of an ordinary manifold. Antisymmetry in particular is understood in the $\Z$-graded sense. For a homotopy Poisson structure, the place of a Poisson tensor is taken by an even function $P$ on the odd cotangent bundle $\Pi T^*M$ satisfying $\lsch P,P\rsch=0$ (which with abuse of language we shall still refer to as a `Poisson tensor'), so that `higher Poisson brackets' are defined by
\begin{equation*}
    \{f_1,\ldots,f_n\}_P:= \pm \lsch \ldots \lsch P,f_1\rsch,\ldots,\rsch |_M\,.
\end{equation*}
In fact, only the power expansion of $P$ around the zero section of $\Pi T^*M$ matters, so one can see $P$ as a formal sum of homogeneous terms (which can be identified with multivectors of a particular degree):
\begin{equation*}
    P=P_0+P_1+P_2+P_3+\ldots\,,
\end{equation*}
where the classical case corresponds to $P=P_2$. 

What becomes of the above picture in the homotopy situation?

The analog of the Lichnerowicz differential $d_P$ makes sense and is defined by the same formula: $d_P=\lsch P, -\rsch$. The only difference is that the  operator $d_P$, which is odd in the sense of parity, is no longer homogeneous in the usual grading of multivectors.  We  regard it as an operator $\Mult(M)\to \Mult(M)$, where now $\Mult(M):=\fun(\Pi T^*M)$. The identity $d_P^2=0$ holds by the Jacobi identity for the Schouten bracket and the condition $\lsch P,P\rsch=0$.

In~\cite{tv:higherpoisson}, we showed that a homotopy Poisson structure on a supermanifold $M$ induces an infinite  sequence of odd brackets $[-,\ldots,-]_P$ on the space of forms on $M$, which we called \emph{higher Koszul brackets}. Namely, for all $n=0,1,2,\ldots$\,, we set (up to signs)
\begin{align*}
    [df_1,\ldots,df_n]_P := \pm d\{f_1,\ldots,f_n\}_P\,, \\
    [df_1,\ldots,df_n,g]_P :=\pm \{f_1,\ldots,f_n,g\}_P\,,
\end{align*}
and all other higher Koszul  brackets are essentially defined from these  ones by the Leibniz rule. (More details in the main text.) 

As shown in~\cite{tv:higherpoisson}, it is also possible to define  an analog of the raising indices map $P^{\#}$, which we shall denote here $a_P^*$,   and now there is a commutative diagram
\begin{equation}\label{eq.diagram2}
    \begin{CD} \Mult  (M)@>{d_P}>> \Mult(M)\\
                @A{a_P^*}AA         @AA{a_P^*}A\\
                \O(M)@>{d}>> \O(M) \,,
    \end{CD}
\end{equation}
analogous to~\eqref{eq.diagram}. So the analog of (a) above holds. Here   $a_P^*\co \O(M)\to \Mult(M)$ is an algebra homomorphism induced by some vector bundle morphism $a_P\co \Pi T^*M\to \Pi TM$. In particular, $a_P^*$ is linear. But here comes the crucial difference with the classical situation. While there is   a single canonical Schouten bracket on $\Mult(M)$, now, unlike   the classical case,  there is a whole   infinite sequence of higher Koszul brackets on $\O(M)$. Hence it is impossible for a linear transformation like $a_P^*$ to take (many) Koszul brackets  to (one) Schouten bracket, so the analog of (b) fails. Therefore it was conjectured in~\cite{tv:higherpoisson} that \emph{there must exist an $\Li$-morphism transforming higher Koszul brackets to the Schouten bracket}. This should be an essentially \textbf{non-linear} transformation $\O(M)$ to $\Mult(M)$! (Recall that an $\Li$-morphism of $\Li$-algebras can be described either as as sequence of multilinear maps or as a nonlinear mapping. More details will be provided in the main text.) So the question was how to find such a non-linear map of forms to multivectors.

In this paper we solve this problem. The solution uses a new technique of \emph{thick} or \emph{microformal} morphisms of supermanifolds introduced in~\cite{tv:nonlinearpullback,tv:microformal}. (An indication of the solution was already given there. Here we provide full details.) One of the purposes of this paper is to demonstrate the power of the new technique and to show that it is indispensable for homotopy Poisson structures, $\Li$-algebras and $\Li$-algebroids.

Let us elaborate on the latter point. It is known that a `correct' geometric framework for   the (classical) Koszul bracket and   Lichnerowicz differential  is that of Lie algebroids (see~\cite[Ch.~10]{mackenzie:book2005}. A Poisson structure (ordinary) on a manifold $M$ makes the cotangent bundle $T^*M$ a Lie algebroid. Raising indices   by a Poisson tensor $P^{ab}$ is exactly the anchor map $T^*M\to TM$ and the bracket of sections of $T^*M$ is the Koszul bracket of $1$-forms. The Koszul bracket on the whole algebra of forms is a particular case of a Lie--Schouten bracket defined for an arbitrary Lie algebroid $A\to M$, while the Lichnerowicz differential $d_P\co \Mult^{k}(M)\to \Mult^{k+1}(M)$ is a particular case of a differential calculating the cohomology of a Lie algebroid. See~Mackenzie~\cite[Ch.7]{mackenzie:book2005} and references therein;  particularly, Kosmann-Schwarzbach and Magri~\cite{yvette:magri-pn1990}. One can show, conversely, that a Poisson structure on $M$ is equivalent to equipping the cotangent bundle $T^*M$ with a Lie algebroid structure `linear' in a particular sense. The commutativity of the diagram~\eqref{eq.diagram} and the relation between the Koszul bracket of forms induced by a Poisson structure and the Schouten bracket of multivector fields can be seen as special cases of   properties of arbitrary Lie algebroids. 

In the homotopy case, we showed in~\cite{tv:higherpoisson} that the higher Koszul brackets are a manifestation of an $\Li$-algebroid structure 
in $T^*M$ induced by a Poisson structure on $M$. Respectively, the commutative diagram~\eqref{eq.diagram2} fit into this framework. The same is true for the  problem of linking higher Koszul brackets with the Schouten brackets. Both the problem and the solution presented here by the method of thick morphisms   naturally belong to the general theory of $\Li$-algebroids (as we shall explain).  

Furthermore, for the classical case (i.e. for an ordinary Poisson structure), the pair of dual bundles  $(TM, T^*M)$ is in fact a Lie bialgebroid. Mackenzie and Xu~\cite{mackenzie:bialg}, see also~\cite[Ch.\,12]{mackenzie:book2005}, introduced the notion of a triangular Lie bialgebroid. It contains the example of $(TM, T^*M)$ for a Poisson manifold $M$ as well as Drinfeld's triangular Lie bialgebras. We can show that for a homotopy Poisson structure on a supermanifold $M$, higher Koszul brackets naturally fit into   an abstract framework of \emph{(quasi)triangular $\Li$-bialgebroids} that generalizes the construction of Mackenzie--Xu. Here we do not elaborate that, but we hope to do so.
%We briefly describe this framework here (see also~\cite{tv:homotopytriang}) and show how our main statement extends to such a setup.

\medskip
\emph{Note about terminology and conventions.}  When we speak about vector spaces and algebras we  assume that they are $\Z$-graded, e.g., $V=V_0\oplus V_1$\,. A $\Z$-grading (\emph{parity}), which is always assumed, should be distinguished from a $\ZZ$-grading, which may be present or not. According to our  philosophy, a  $\ZZ$-grading  is an extra structure which comes e.g. from  a linear structure (such as that of a vector space or vector bundle) and should be seen as its replacement when it is absent.
When both kinds of gradings are present, they are assumed to be independent. Parity of an element is denoted by the tilde over the element's symbol.

%%%%%%%%%%%%%%%%%%%%%%%%%%%%%%%%%%%%%%%%%%%%%%%%%%%%%%%%%%%%%%%%%%%%%%%%%%%%%%%%%%%%%%%%%%%%%%%%%%%%%%%%%%%%%%%%%%%%%
%%%%%%%%%%%%%%%%%%%%%%%%%%%%%%%%%%%%%%%%%%%%%%%%%%%%%%%%%%%%%%%%%%%%%%%%%%%%%%%%%%%%%%%%%%%%%%%%%%%%%%%%%%%%%%%%%%%%%
%%%%%%%%%%%%%%%%%%%%%%%%%%%%%%%%%%%%%%%%%%%%%%%%%%%%%%%%%%%%%%%%%%%%%%%%%%%%%%%%%%%%%%%%%%%%%%%%%%%%%%%%%%%%%%%%%%%%%
\section{Recollection of homotopy  algebras and various bracket structures} \label{sec.recall}

In this section we recall various notions related with bracket structures that we shall need. The reader who believes he knows all that can skip it and go directly to the next section. However, there is one idea that we wish to stress here: different manifestations of an algebraic structure such as a Lie (super)algebra, Lie algebroid, $\Li$-algebra and $\Li$-algebroid.

\subsection{Lie superalgebras. Poisson and Schouten brackets. Canonical examples. ``Master Hamiltonians''}

It would help to recall first the notion of a Lie superalgebra and by using it as a toy example to explain the approach that we   apply to the more complicated case of $\Li$-algebras and $\Li$-algebroids. Thus,  a \emph{Lie superalgebra} $L$ is a vector space $L=L_0\oplus L_1$ endowed with  a bracket with the following properties: it is even, $[L_i,L_j]\subset L_{i+j}$;   bilinear,   in particular,
\begin{equation}\label{eq.liebilin}
    [\la u,v]=\la [u,v],  \quad [u,v\la]= [u,v]\la\,,
\end{equation}
for a scalar $\la$ of arbitrary parity;  antisymmetric:
\begin{equation}\label{eq.lieantisym}
    [u,v]=-(-1)^{\ut\vt}[v,u]\,;
\end{equation}
and satisfies the Jacobi identity:
\begin{equation}\label{eq.liejac}
    [u,[v,w]]=[[u,v],w]+(-1)^{\ut\vt}[v,[u,w]]\,.
\end{equation}
By using the antisymmetry property the Jacobi identity can be also re-written in the ``more classical'' form as
\begin{equation}\label{eq.liejacclass}
    (-1)^{\ut\wt}[u,[v,w]]+(-1)^{\wt\vt}[w,[u,v]]+(-1)^{\vt\ut}[v,[w,u]]=0\,.
\end{equation}
A \emph{morphism} (or \emph{homomorphism}) of Lie superalgebras is an even linear map $\f\co L\to K$ (so in particular $\f(\la u)=\la \f(u)$) that preserves the brackets:
\begin{equation}\label{eq.liehom}
    \f([u,v])=[\f(u),\f(v)]\,.
\end{equation}

Our first remark is that though we speak about `vector spaces', we always assume the possibility of extending scalars and introducing some commutative superalgebra as the ground ring. This explains our careful mentioning of the linearity properties, which always allow for `odd constants'. This helps when we move on to vector bundles over supermanifolds and algebroids. Another remark is that when required, we consider our vector spaces as supermanifolds, i.e., do not distinguish a ($\Z$-graded) vector space $V$ from the canonically associated with it `vector supermanifold', which we usually (with some exceptions)     denote by the same symbol.  A `vector supermanifold' has a preferred atlas of global charts in which all changes of coordinates are linear homogeneous.  An even linear map of vector spaces gives a supermanifold map, linear relative the preferred atlases, which we shall denote by the same symbol.

For a Lie superalgebra $L$, consider an   algebraic structure induced on the parity-reversed vector space $\Pi L$. If the definition of the bracket is not changed, only the parities of the elements are reversed, then   all the identities above stay  with the   parities shifted by $1$.
It is formally convenient    to combine the two cases and define a \emph{Lie superalgebra of parity} $\e\in\Z $ as a vector space $L$ with a bracket operation of parity $\e$, i.e.
\begin{equation}\label{eq.lieeps}
      [L_i,L_j]\subset L_{i+j+\e}\,, 
\end{equation}
which is bilinear:
\begin{equation}\label{eq.liebilineps}
    [\la u,v]=\la [u,v],  \quad [u,v\la]= [u,v]\la\,,
\end{equation}
(note no   signs!),  antisymmetric:
\begin{equation}\label{eq.lieantisymeps}
    [u,v]=-(-1)^{(\ut+1)(\vt+\e)}[v,u]\,,
\end{equation}
and satisfies the Jacobi identity:
\begin{equation}\label{eq.liejaceps}
    [u,[v,w]]=[[u,v],w]+(-1)^{(\ut+\e)(\vt+\e)}[v,[u,w]]\,.
\end{equation}
For $\e=0$, we come back to the standard notion; we may refer to it as an `even' Lie superalgebra. For $\e=1$, we speak about an \emph{odd Lie superalgebra}. If $L$ is a Lie superalgebra of parity $\e$, the the formula
\begin{equation}\label{eq.liepi}
    [\Pi u, \Pi v]:= \Pi [u,v]
\end{equation}
(no  extra signs!) makes $\Pi L$ a Lie superalgebra of parity $\e+1$. Note a peculiar fact: as a consequence of the bilinearity  holding in the form~\eqref{eq.liebilineps}, there is an identity 
\begin{equation}\label{eq.oddcomma}
    [u\la,v] =(-1)^{\lat\e}[u,\la v]\,,
\end{equation}
meaning that from the viewpoint of the sign rule, the parity of the bracket  in a Lie superalgebra of parity $\e$ `sits' on the central comma, which for $\e=1$ has to be regarded as an odd symbol. Later we shall see that there is another sign convention for a bracket in $\Pi L$ which can be more convenient in some problems. 

Now we turn to different versions of Poisson brackets. In a Poisson algebra (we shall recall formal definitions in a moment), there are two operations, bracket and associative product, so their relative parity cannot be altered by any parity shifts. We shall always assume that the associative product is even.

\begin{definition} A \emph{Poisson algebra of parity} $\e\in \Z$ is a vector space $A=A_0\oplus A_1$ with 
an even associative bilinear multiplication,
\begin{equation}\label{eq.ass}
      A_i A_j \subset A_{i+j}\,,
\end{equation}
and a bracket of parity $\e$,
\begin{equation}\label{eq.poisseps}
      \{A_i,A_j\}\subset A_{i+j+\e}\,,
\end{equation} 
with respect to which $A$ is a Lie superalgebra of parity $\e$, and there is the Leibniz identity:
\begin{equation}\label{eq.leibpoiss}
    \{a, bc\}=\{a,b\}c +(-1)^{(\at+1)\bt} b\{a,c\}\,.
\end{equation}
\end{definition}
One can check  using  the antisymmetry   that~\eqref{eq.leibpoiss} can be also re-written as the derivation property `from the right':
\begin{equation}\label{eq.leibright}
    \{ab, c\}=a\{b,c\} + (-1)^{\bt(\ct+\e)}\{a,c\}b \,,
\end{equation}
which is sometimes useful.
The bracket is called \emph{Poisson bracket}. % and we refer to both cases $\e=0,1$ just as `Poisson algebras' when parity $\e$ plays no role. 
A Poisson algebra  of parity $0$ is referred to simply as a \emph{Poisson algebra} or   \emph{even Poisson algebra}  when it is necessary to stress the parity; a Poisson algebra of parity $1$ is also called \emph{odd Poisson algebra} or   \emph{Schouten algebra} or   \emph{Gerstenhaber algebra} (due to the prototypal examples). The same terminology   applies to brackets. An odd Poisson bracket is   called   \emph{antibracket} in the physics literature and also a \emph{Buttin bracket} (after Claudette Buttin, see~\cite{buttin:schouten,buttin:last}).
We use above curly brackets, but in concrete examples all kinds of bracket shapes are used. 

In examples the multiplication is often commutative, $ab=(-1)^{\at\bt}ba$, but we do not assume that so not to exclude e.g. an associative noncommutative algebra (such as the algebra of linear operators) with $\{A,B\}:=[A,B]=AB-(-1)^{\At\Bt}BA$. (See also~\cite{tv:poiss}.)

\begin{remark}
We work everywhere with $\Z$-grading (parity). A $\ZZ$-grading (`degree' or `weight') may appear in concrete examples, sometimes as independent from parity and sometimes so that parity coincides with   $\ZZ$-grading modulo $2$. We consider a $\ZZ$-grading when it naturally arises in geometrical examples, but do not assume that all our supermanifolds are \emph{a priori} graded. (See   the general notion of graded manifolds, i.e. supermanifolds with an extra $\ZZ$-grading in the structure sheaf,  in~\cite{tv:graded}.) Poisson structures of a non-zero weight and their generalizations became recently a prominent object of study under the name ``shifted Poisson structures'', see e.g. Pridham~\cite{pridham:outline2018}.
\end{remark}

Our main examples of Poisson algebras will be various Poisson algebras of functions, so we now concentrate on those. The prototypal examples of even and odd Poisson algebras are given by the two parallel examples of `canonical brackets' that we briefly recall.  (`Canonical' means that their definition does not require any extra structure.) Let $M$ be a supermanifold.
\begin{example}\label{ex.canpoi}
On functions on the cotangent bundle $T^*M$,   the (even) \emph{canonical Poisson bracket} 
\begin{equation}\label{eq.canpoiss}
    (H,G)= (-1)^{\Ht\at+\at}\der{H}{p_a}\der{G}{x^a} - (-1)^{\Ht\at}\der{H}{x^a}\der{G}{p_a}\,.
\end{equation}
In particular,
\begin{equation}\label{eq.canpoisscoord}
    (p_a,x^b)=\d_a^b=-(-1)^{\at} (x^b,p_a)\,.
\end{equation}
It is uniquely defined by the axioms of an even Poisson bracket together with the `initial conditions'  
\begin{equation}\label{eq.initcanpoiss}
    (f,g)=0\,, \quad (H_X,f)=X(f)\,, \quad (H_X,H_Y)=H_{[X,Y]}\,,
\end{equation}
where $f,g\in \fun(M)$, $X,Y\in \Vect(M)$, and $H_X:=X^a(x)p_a$ for $X=X^a(x)\lder{}{x^a}$.\,\footnote{\,$H$ in $H_X$ is for `Hamiltonian'.} Here $x^a$ are local coordinates on $M$ and $p_a$ are the corresponding fiber coordinates with the transformation law $p_a=\lder{x^{a'}}{x^a}\,p_{a'}$ (the same as for partial derivatives). These brackets can be also  obtained from the canonical even symplectic form $\o=d(dx^ap_a)=dp_adx^a$.
\end{example}

\begin{example}\label{ex.cansch}
On functions on the anticotangent bundle $\Pi T^*M$,   the (odd) \emph{canonical Schouten bracket} 
\begin{equation}\label{eq.cansch}
    \lsch P,R\rsch= (-1)^{(\Pt+1)(\at+1)}\der{P}{x^*_a}\der{R}{x^a} - (-1)^{(\Pt+1)\at}\der{P}{x^a}\der{R}{x^*_a}\,.
\end{equation}
In particular,
\begin{equation}\label{eq.canschcoord}
    \lsch x^*_a,x^b\rsch =\d_a^b=- \lsch x^b,x^*_a\rsch\,.
\end{equation}
It is uniquely defined by the axioms of an odd Poisson bracket together with the `initial conditions' 
\begin{equation}\label{eq.initcansch}
    \lsch f,g\rsch=0\,, \quad \lsch \mul_X,f\rsch=(-1)^{\Xt}X(f)\,, \quad \lsch \mul_X,\mul_Y\rsch=\mul_{[X,Y]}\,,
\end{equation}
where $f,g\in \fun(M)$, $X,Y\in \Vect(M)$, and $\mul_X:=(-1)^{\Xt}X^a(x)x^*_a$ for $X=X^a(x)\lder{}{x^a}$.\,\footnote{\,$P$ in $P_X$ is for ``polyvector'' (the Russian for  multivector).} Here   $x^*_a$ are   fiber coordinates in $\Pi T^*M$ (``antimomenta'') with the same transformation law as $p_a$, $x^*_a=\lder{x^{a'}}{x^a}\,x^*_{a'}$ but of the opposite parity $\tilde x^*_a=\at+1$. These brackets can be also  obtained from the canonical odd symplectic form $\o=d(dx^ax^*_a)=-(-1)^{\at}dx^*_adx^a$.% on $\Pi T^*M$.
\end{example}

\begin{remark}
The choices of signs in the definitions of $H_X$ and $\mul_X$   and in the `initial conditions'~\eqref{eq.initcanpoiss} and~\eqref{eq.initcansch}  for the Poisson and Schouten brackets are basically unique and fixed by the requirement  of linearity.
\end{remark}

\begin{remark}
Note that for an ordinary manifold $M$, functions on $\Pi T^*M$ (which will be automatically polynomial in odd fiber coordinates) can be identified with inhomogeneous  {multivector fields} on $M$; and,  likewise, functions on $\Pi TM$ can be identified with inhomogeneous  differential forms on $M$. 
For simplicity, we shall apply the same terminology in the general super case.  We shall use the notations
\begin{equation}\label{eq.forms}
    \O(M):= \fun(\Pi TM)
\end{equation}
and 
\begin{equation}\label{eq.multf}
    \Mult(M):= \fun(\Pi T^*M)
\end{equation}
and with an abuse of language call the elements of these algebras,  \emph{differential forms} and \emph{multivector fields} on $M$, respectively. (Strictly speaking, for a supermanifold, the correct terms   should be `pseudodifferential forms'  and `pseudomultivector fields'.)
\end{remark}

If a Poisson bracket of parity $\e$ is defined in the algebra $\fun(M)$, then a supermanifold $M$ is called a \emph{Poisson manifold} (for $\e=0$) and \emph{odd Poisson} or \emph{Schouten manifold} (for $\e=1$). (We suppress the prefix `super-'.) The bracket is referred to as a `bracket on $M$' or a `Poisson structure  on $M$'. One can easily deduce the following.
\begin{proposition}
Let $x^a$ be local coordinates on $M$. For arbitrary functions $f$ and $g$,
\begin{equation}
    \{f,g\} = (-1)^{\ft\at+\at} \der{f}{x^a}\{x^a,x^b\}\der{g}{x^b}\,.
\end{equation}
regardless of the parity of the bracket 
$\e$. 
\end{proposition}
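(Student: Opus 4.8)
The plan is to exhibit the bracket as a first-order bidifferential operator and then read off its coefficients on the coordinates. I would begin with two elementary observations. First, the bracket vanishes on constants: applying the Leibniz identity~\eqref{eq.leibpoiss} to $\{f,1\cdot 1\}$ forces $\{f,1\}=0$, and then bilinearity~\eqref{eq.liebilineps} gives $\{f,\la\}=0$ for any scalar $\la$. Second, by the two Leibniz identities~\eqref{eq.leibpoiss} and~\eqref{eq.leibright} the bracket is a biderivation, so that $\{f,-\}$ and $\{-,g\}$ are graded derivations of $\fun(M)$.

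The heart of the argument is a chain rule obtained from Hadamard's lemma, which I would apply one slot at a time. Writing $g(x)=g(x_0)+(x^b-x_0^b)g_b(x)$ with $g_b(x_0)=\der{g}{x^b}(x_0)$, and expanding $\{f,g\}$ by the left Leibniz rule~\eqref{eq.leibpoiss}, the constant term drops, the summand carrying the factor $(x^b-x_0^b)$ vanishes upon evaluation at $x_0$, and $\{f,x^b-x_0^b\}=\{f,x^b\}$. Since $x_0$ is arbitrary this yields the identity of functions
\begin{equation*}
    \{f,g\}=\{f,x^b\}\,\der{g}{x^b}\,,
\end{equation*}
with no extra sign, the right-hand derivative already appearing in the correct position. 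An entirely analogous expansion of $f$ in the first slot, this time via the right Leibniz rule~\eqref{eq.leibright}, produces $\{f,x^b\}=\pm\,\{x^a,x^b\}\,\der{f}{x^a}$ with $\der{f}{x^a}$ emerging on the right; transposing it back to the left past $\{x^a,x^b\}$ and substituting into the previous display gives the asserted formula.

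The only point requiring care is the sign, and in particular its independence of $\e$. Reducing the first slot brings in the factor $(-1)^{(\ft+\at)(\bt+\e)}$ from~\eqref{eq.leibright} (the parity of $\der{f}{x^a}$ being $\ft+\at$), while moving $\der{f}{x^a}$ past $\{x^a,x^b\}$ contributes $(-1)^{(\ft+\at)(\at+\bt+\e)}$, since $\{x^a,x^b\}$ has parity $\at+\bt+\e$; the two $\e$-dependent contributions cancel modulo $2$, leaving exactly $(-1)^{(\ft+\at)\at}=(-1)^{\ft\at+\at}$. This is the mechanism by which the stated coefficient is the same for every parity $\e$, the whole $\e$-dependence being absorbed into $\{x^a,x^b\}$ itself. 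The sole analytic ingredient is Hadamard's lemma, which holds on supermanifolds (the odd coordinates being nilpotent and the even ones handled by the classical statement); everything else is pure graded sign bookkeeping, which I expect to be the only real obstacle.
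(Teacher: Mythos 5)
The paper itself gives no argument for this proposition (it is introduced by ``One can easily deduce the following''), so there is no proof to compare against; what follows is an assessment of your argument on its own terms. Its skeleton is the natural one, and your sign bookkeeping is correct: reducing the second slot does give $\{f,g\}=\{f,x^b\}\der{g}{x^b}$ with no extra sign, reducing the first slot via \eqref{eq.leibright} brings in $(-1)^{(\ft+\at)(\bt+\e)}$, transposing $\der{f}{x^a}$ (parity $\ft+\at$) past $\{x^a,x^b\}$ (parity $\at+\bt+\e$) brings in $(-1)^{(\ft+\at)(\at+\bt+\e)}$, and the product is $(-1)^{(\ft+\at)\at}=(-1)^{\ft\at+\at}$, the $\e$-dependence cancelling exactly as you say; the resulting formula is also consistent with the canonical brackets \eqref{eq.canpoiss} and \eqref{eq.cansch}, as it must be.

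The genuine gap is the inference ``since $x_0$ is arbitrary this yields the identity of functions''. Here $M$ is a supermanifold, and on a supermanifold a function is \emph{not} determined by its values at points: evaluation at a point kills all nilpotents, so if $\x^1,\x^2$ are odd coordinates, the function $\x^1\x^2$ vanishes at every point of the underlying manifold without being zero. Your Hadamard step therefore proves only that $\{f,g\}-\{f,x^b\}\der{g}{x^b}$ vanishes at every point, which is strictly weaker than the claimed identity; your closing parenthetical defends the validity of Hadamard's lemma (which is indeed fine) but not this inference, and this inference is precisely where the super setting bites. The standard repair stays within your toolkit: observe that $D:=\{f,-\}-\{f,x^b\}\der{}{x^b}$ is a derivation of $\fun(M)$ annihilating all coordinate functions and all constants, and show such a derivation is zero. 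Since $D$ kills the odd coordinates and every function is polynomial in them, the Leibniz rule reduces the claim to $D(h)=0$ for $h$ depending only on the even coordinates $u^i$; then a single Hadamard step $h=h(u_0)+(u^i-u_0^i)h_i$ gives $D(h)=(u^i-u_0^i)D(h_i)$, so every coefficient of $D(h)$ in its expansion over the odd coordinates --- an ordinary smooth function that does not depend on $u_0$ --- vanishes at the arbitrary point $u_0$, hence identically. The same repair applies to your first-slot reduction (where one also needs the trivial remark, symmetric to the one you made, that constants in the first slot bracket to zero). With this inserted, your proof is complete.
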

Consider the properties of $\{x^a,x^b\}$ in the even and odd cases separately.

For an even bracket ($\e=0$), we have  the antisymmetry $\{x^b,x^a\}= - (-1)^{\at\bt}\{x^a,x^b\}$. Set
\begin{equation}\label{eq.poistens}
    P^{ab}:=(-1)^{\at}\{x^a,x^b\}\,.
\end{equation}
This turns antisymmetry into symmetry with respect to reversed parity:
\begin{equation*}%\label{eq.poistenssym}
    P^{ab} =(-1)^{(\at+1)(\bt+1)}P^{ba}\,.
\end{equation*}
A tensor with such property corresponds to an \textbf{even} fiberwise quadratic function $P\in \fun(\Pi T^*M)$,
\begin{equation}\label{eq.poistens2}
    P :=\frac{1}{2}\,P^{ab}x^*_bx^*_a\,.
\end{equation}

Similarly, for an odd Poisson bracket on $M$, we have the antisymmetry with respect to reversed parity $\{x^b,x^a\}= - (-1)^{(\at+1)(\bt+1)}\{x^a,x^b\}$. Set likewise
\begin{equation}\label{eq.schtens}
    H^{ab}:=(-1)^{\at}\{x^a,x^b\}\,.
\end{equation}
This gives a symmetric tensor:
\begin{equation*}%\label{eq.schtenssym}
    H^{ab} =(-1)^{\at \bt}H^{ba}\,, 
\end{equation*}
and we obtain an \textbf{odd} fiberwise quadratic function $H\in \fun(T^*M)$,
\begin{equation}\label{eq.schtens2}
    H:=\frac{1}{2}\,H^{ab}p_bp_a\,.
\end{equation}

The following theorem gives a coordinate-free relation between a Poisson bracket of parity $\e$ on a supermanifold $M$ and the objects $P$, $H$.
\begin{theorem}
An even Poisson bracket on $M$ is defined by an even fiberwise quadratic function $P\in\fun(\Pi T^*M)$ via the canonical Schouten bracket, by the formula
\begin{equation}\label{eq.poissderived}
    \{f,g\}:= \lsch f, \lsch P,g\rsch \rsch = \lsch \lsch f, P\rsch, g \rsch\,,
\end{equation}
with coordinate expressions given by~\eqref{eq.poistens},\eqref{eq.poistens2}. The Jacobi identity for $\{-,-\}$ is equivalent to the equation 
\begin{equation}\label{eq.pp}
    \lsch P,P \rsch=0 
\end{equation}
on the even function $P$.

An odd Poisson bracket on $M$ is defined by an odd fiberwise quadratic function $H\in\fun(T^*M)$ via the canonical Poisson bracket, by the formula
\begin{equation}\label{eq.schderived}
    \{f,g\}:= -(f, (H,g))  = -((f, H),g)\,,
\end{equation}
with coordinate expressions given by~\eqref{eq.schtens},\eqref{eq.schtens2}. The Jacobi identity for $\{-,-\}$ is equivalent to the equation
\begin{equation}\label{eq.hh}
    (P,P)=0 
\end{equation}
on the odd function  $H$.
\end{theorem}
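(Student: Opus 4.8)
The plan is to recognize both assertions as instances of the classical derived-bracket construction, the even case for the Schouten algebra $\Mult(M)=\fun(\Pi T^*M)$ and the odd case for the Poisson algebra $\fun(T^*M)$. I would carry out the even case in full and then obtain the odd one by the parallel substitution in which $\Pi T^*M$, the Schouten bracket $\lsch-,-\rsch$ and $P$ are replaced by $T^*M$, the canonical Poisson bracket $(-,-)$ and $H$. First I would check that $\lsch f,\lsch P,g\rsch\rsch$ actually lands in $\fun(M)\subset\Mult(M)$ for $f,g\in\fun(M)$: since $P$ is fibrewise quadratic and $g$ is independent of the antimomenta $x^*_a$, the bracket $\lsch P,g\rsch$ is linear in $x^*_a$, and bracketing once more with the $x^*$-independent function $f$ removes the remaining antimomenta. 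A direct computation from~\eqref{eq.cansch} then reproduces the coordinate formula of the Proposition with $\{x^a,x^b\}=(-1)^{\at}P^{ab}$, giving~\eqref{eq.poistens}--\eqref{eq.poistens2}. The equality of the two forms $\lsch f,\lsch P,g\rsch\rsch=\lsch\lsch f,P\rsch,g\rsch$ is immediate from the Jacobi identity~\eqref{eq.liejaceps} for the odd ($\e=1$) Schouten bracket: the only extra term is proportional to $\lsch P,\lsch f,g\rsch\rsch$, which vanishes because $\lsch f,g\rsch=0$ by the first of~\eqref{eq.initcansch}.

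Next I would verify the Poisson-algebra axioms. Bilinearity is clear and the parity count shows $\widetilde{\{f,g\}}=\ft+\gt$, so the bracket is even. The graded antisymmetry $\{f,g\}=-(-1)^{\ft\gt}\{g,f\}$ and the Leibniz rule~\eqref{eq.leibpoiss} both follow by combining the antisymmetry~\eqref{eq.lieantisymeps} and Jacobi identity~\eqref{eq.liejaceps} of the Schouten bracket with the facts that $\fun(M)$ is an abelian subalgebra ($\lsch f,g\rsch=0$) and that $\lsch P,-\rsch$ and $\lsch f,-\rsch$ are derivations of the associative product.

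The heart of the matter is the equivalence of the Jacobi identity with $\lsch P,P\rsch=0$. Here I would introduce the odd operator $D:=\lsch P,-\rsch$, which by the Jacobi identity for the Schouten bracket is a derivation of $\lsch-,-\rsch$ and, since $P$ is even, satisfies $D^2=\tfrac12\lsch\lsch P,P\rsch,-\rsch$. Writing $\{f,g\}=\lsch f,Dg\rsch$ and expanding the Jacobiator $\{f,\{g,h\}\}-\{\{f,g\},h\}-(-1)^{\ft\gt}\{g,\{f,h\}\}$ by means of the derivation property of $D$ together with $\lsch f,g\rsch=0$, all the terms carrying $\lsch Dg,Dh\rsch$-type pairings cancel in the antisymmetric combination --- this is the classical derived-bracket lemma --- so that the Jacobiator collapses to a single expression built from $D^2$, hence proportional to $\lsch\lsch P,P\rsch,-\rsch$ evaluated on $f,g,h$. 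Thus $\lsch P,P\rsch=0$ forces the Jacobi identity. For the converse I would test the vanishing Jacobiator on coordinate functions $f=x^a$, $g=x^b$, $h=x^c$: bracketing with a coordinate amounts to $\pm\der{}{x^*_a}$, so these values recover exactly the (fibrewise cubic) coefficients of $\lsch P,P\rsch$; hence the Jacobi identity for all arguments forces $\lsch P,P\rsch=0$.

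Finally, the odd case is entirely parallel: $\fun(M)$ is again abelian, now for the even canonical Poisson bracket ($(f,g)=0$ by~\eqref{eq.initcanpoiss}), the operator $D:=(H,-)$ is odd with $D^2=\tfrac12((H,H),-)$ because $H$ is odd, and the same four steps give well-definedness, the identity $-(f,(H,g))=-((f,H),g)$, the odd Poisson-algebra axioms, and the equivalence of the Jacobi identity with $(H,H)=0$ (equation~\eqref{eq.hh}, which should read $(H,H)=0$). The one step I expect to require genuine care is isolating the $\lsch P,P\rsch$ (respectively $(H,H)$) term in the Jacobiator with the correct Koszul signs: the conceptual content is standard, but the super sign bookkeeping in the antisymmetry and in the derived-bracket cancellation is where all the real labor sits.
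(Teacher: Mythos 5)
Your proposal is correct and coincides with the paper's own route: the paper gives no written-out proof of this theorem, presenting it as an instance of the derived-bracket construction (it cites Kosmann-Schwarzbach's derived brackets and Voronov's higher derived brackets right after the statement), and your argument --- closedness of the abelian subalgebra $\fun(M)$ under the derived bracket, the derived-bracket lemma with $D^2=\tfrac12\lsch\lsch P,P\rsch,-\rsch$ (respectively $D^2=\tfrac12((H,H),-)$), and recovery of the fiberwise-cubic coefficients of $\lsch P,P\rsch$ by evaluating the Jacobiator on coordinate functions --- is exactly that construction written out in detail. You also correctly spotted that the displayed equation in the odd half, written as $(P,P)=0$ in the paper, is a typo for $(H,H)=0$.
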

(The minus sign in~\eqref{eq.schderived} is a result of   conventions. One can get rid of it. See later.)

We may write $\{-,-\}_{P,H}$  to emphasize that a bracket on $M$ is defined by $P$ or $H$.

Formulas~\eqref{eq.poissderived} and~\eqref{eq.schderived} are particular instances of derived bracket  introduced in~\cite{yvette:derived} (see also exposition in~\cite{yvette:derived-survey2004} and~\cite{tv:graded}).

An odd function $H\in \fun(T^*M)$  defining an odd Poisson bracket on $M$ will be called its \emph{master Hamiltonian}. (It satisfies the  \textbf{master equation}~\eqref{eq.hh}.) With some abuse of language, we shall apply the same terminology to an even function $P\in \fun(\Pi T^*M)$ defining an even Poisson bracket on $M$ (a  Poisson tensor  or    bivector in the classical terminology). (If one wishes to be consistent with the `anti-' terminology for $\Pi T^*M$, a good name for $P$ would be an ``antihamiltonian''.)  

For better visualization of the two parallel constructions, we can bring them together as a table\footnote{Borrowed with some modification from~\cite[\S 1]{tv:graded}. In particular, our $H^{ab}$ differs from $S^{ab}$ in~\cite{tv:graded}  by the minus sign.}:
\begin{center}
{\renewcommand{\arraystretch}{1.3}
\begin{tabular}{|c|c|} \hline
  % after \\ : \hline or \cline{col1-col2} \cline{col3-col4} ...
  \rule{0pt}{18pt} An even Poisson manifold $M$ & An odd Poisson or Schouten manifold $M$  \\
  \hline
  \multicolumn{2}{|c|}{\rule{0pt}{21pt}$\displaystyle
  \{f,g\}_{P,H}= (-1)^{\at(\ft+1)}\der{f}{x^a}\{x^a,x^b\}_{P,H}\der{g}{x^b}$}\\
  \multicolumn{2}{|c|}{\small(same coordinate formula for even and odd bracket)}\\
  \hline
  $\{x^a,x^b\}_P=(-1)^{\at}\,P^{ab}$,  & $\{x^a,x^b\}_H= (-1)^{\at}\,H^{ab}$,   \\
$P^{ab}$ is  `Poisson tensor'  & $H^{ab}$  is `Schouten tensor' \\
$P^{ab}=(-1)^{(\at+1)(\bt+1)}P^{ba}$ & $H^{ab}=(-1)^{\at\bt}H^{ba}$\\
$\widetilde{P^{ab}}=\at+\bt$ & $\widetilde{H^{ab}}=\at+\bt+1$\\
Master (anti)Hamiltonian: & Master Hamiltonian:
\\
$\displaystyle
P=\frac{1}{2}P^{ab}(x)x^*_bx^*_a\in \fun(\Pi T^*M)$ & $\displaystyle
H=\frac{1}{2}H^{ab}(x)p_bp_a\in \fun(T^*M)$\\
Jacobi for $\{\ ,\ \}_P$ $\Leftrightarrow$ $\lsch P,P\rsch=0$ & Jacobi for $\{\ ,\ \}_H$
$\Leftrightarrow$ $(H,H)=0$\\
Explicit formula: & Explicit formula:\\
$\displaystyle \{f,g\}_P=\lsch f,\lsch P,g\rsch\rsch$ & $\displaystyle
\{f,g\}_H=-(f,(H,g))$\\
\rule{0pt}{22pt}\parbox{6.5cm}{ \small Here $\lsch -,-\rsch$ is the canonical  Schouten bracket on $\Pi %
T^*M$. $P$ is even.}  & \rule{0pt}{22pt}\parbox{6.5cm}{\small Here $(- ,-)$ is the canonical
Poisson bracket on $T^*M$.
\small  $H$ is odd.} \\
\rule{0pt}{-20pt} &  \rule{0pt}{-20pt}\\
   \hline
\end{tabular}
}
\end{center}

\medskip
The canonical brackets on $T^*M$ and $\Pi T^*M$ have their own ``master Hamiltonians'', which are interesting to see. 

\begin{example}[Master Hamiltonian for the canonical Schouten bracket]
Let $x^a,x^*_a,p_a,\pi^a$ be coordinates on $T^*(\Pi T^*M)$, where $p_a$ and $\pi^a$ are the conjugate momenta for $x^a$ and $x^*_a$. (Note that $\tilde \pi^a=\tilde x^*_a=\at+1$.) One can check that the following odd function on $T^*(\Pi T^*M)$ is invariant:
\begin{equation}\label{eq.hamforsch}
    \Sch:=(-1)^{\at} \pi^ap_a\,,
\end{equation}
and that
\begin{equation}\label{eq.hamforsch2}
    ((P,\Sch),R)=\lsch P,R\rsch\,,
\end{equation}
for arbitrary $P,R\in \fun(\Pi T^*M)$. Hence $\Sch$ is up to the minus sign the master Hamiltonian for the canonical Schouten bracket.
\end{example}

\begin{example}[Master Hamiltonian for the canonical Poisson bracket]
Similarly, in canonical coordinates $(x^a,p_a,x^*_a,p^{*a})$    on $\Pi T^*(T^*M)$, one can check that the even function
\begin{equation}\label{eq.hamforpoi}
    \Poi:=  p^{*a}x^*_a\,,
\end{equation}
is invariant and satisfies
\begin{equation}\label{eq.hamfopoi2}
    \lsch\lsch H,\Poi\rsch,G\rsch=(H,G)\,,
\end{equation}
for arbitrary $H,G\in \fun(T^*M)$. 
\end{example}

\begin{remark} Both $\Sch$ and $\Poi$ are obtained from the `linear Hamiltonians' $H_d=dx^ap_a$ and $P_d=-dx^ax^*_a$ corresponding to the de Rham differential $d=dx^a\lder{}{x^a}$ (in the notation of Examples~\ref{ex.canpoi} and~\ref{ex.cansch})
by applying the Mackenzie--Xu diffeomorphism~\cite{mackenzie:bialg} and its odd analog~\cite{tv:graded}, $T^*(\Pi TM)\cong T^*(\Pi T^*M)$ and $\Pi T^*(\Pi TM)\cong \Pi T^*(T^*M)$. See also~\cite[\S 2]{tv:microformal}. It is a helpful exercise to check directly that $(-1)^{\at} \pi^a$ and $p^{*a}$ transform as $dx^a$.
\end{remark}

\subsection{Lie algebroids. Manifestations of Lie  brackets}
On functions on the dual space to a Lie algebra there is a Poisson bracket called Lie--Poisson or Berezin--Kirillov bracket. It was introduced by S.~Lie and rediscovered in the form a symplectic structure on coadjoint orbits by Kirillov, Kostant and Souriau and as a Poisson bracket by Berezin. It was Weinstein who observed that it had  already been known to Lie. The same is true for Lie superalgebras. There is also a parallel Schouten bracket on functions on the antidual (reversed parity dual) space. Our point is that all these structures and one more  that we shall now recall, should be seen as equivalent manifestations of a Lie bracket. We shall extend this picture to Lie algebroids and $\Li$-algebras. 

\begin{definition}
A  vector field $Q$ on a supermanifold $M$ is called \emph{homological} if   $Q$ is odd and $[Q,Q]=2Q^2=0$. A supermanifold endowed with a homological vector field $Q$ is called a \emph{$Q$-manifold}; the field $Q$ is also referred to as a \emph{$Q$-structure}. A \emph{morphism} of $Q$-manifolds (or a \emph{$Q$-morphism} or a \emph{$Q$-map}) is a smooth map $\f\co M_1\to M_2$  that intertwines the $Q$-structures on $M_1$ and $M_2$ (i.e. $Q_1$ and $Q_2$ on $M_1$ and $M_2$ are $\f$-related).
\end{definition}

Recall that arbitrary vector fields $X_1$ and $X_2$ on (super)manifolds $M_1$ and $M_2$ are $\f$-related for a map $\f\co M_1\to M_2$ if (equivalently) 
\begin{equation*}
   \f^* \circ X_2 = X_1\circ \f^*
\end{equation*}
as operators on functions or the diagram
\begin{equation*} 
    \begin{CD} (\Pi)TM_1   @>{T\f}>> (\Pi)TM_2 \\
                @A{X_1}AA         @AA{X_2}A\\
               M_1@>{\f}>> M_2 \,,
    \end{CD}
\end{equation*}
commutes. (For odd fields one has to consider the bundles $\Pi TM_1$ and $\Pi TM_2$.) In local coordinates this is expressed by
\begin{equation}\label{eq.relatedvfs}
    X_1^a(x) \der{\f^i}{x^a} =X_2^i(\f(x))\,.
\end{equation}

Consider a vector space $L=L_0\oplus L_1$ (which we when necessary consider as a supermanifold). The linear structure induces in the algebra of functions a  non-negative $\ZZ$-grading  by the degrees of linear coordinates.   We shall refer to it as \emph{weight}. Parity and weight are a priori independent. Consider all   neighbors of $L$, i.e. the vector spaces $L^*$, $\Pi L$ and $\Pi L^*$.

\begin{proposition}[Equivalent manifestations of a Lie superalgebra]
A  Lie superalgebra structure in   $L$ is  equivalent to the following algebraic and geometric structures on its neighbors:
\begin{itemize}
  \item An odd Lie superalgebra structure in a vector space $\Pi L$;
  \item A homological vector field of weight $+1$ on the supermanifold $\Pi L$;
  \item An even Poisson bracket of weight $-1$ on the supermanifold $L^*$;
  \item An odd Poisson bracket of weight $-1$ on the supermanifold $\Pi L^*$. 
\end{itemize}
\end{proposition}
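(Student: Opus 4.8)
The plan is to reduce all four equivalences to a single piece of data — a graded--antisymmetric array of structure constants satisfying the Jacobi identity — and then to check that each structure on a neighbour of $L$ is merely a different packaging of exactly this data. The first bullet is immediate: formula~\eqref{eq.liepi} already sets up a bijection between Lie superalgebra structures of parity $\e$ on $L$ and of parity $\e+1$ on $\Pi L$, transporting the identities verbatim, so a Lie superalgebra on $L$ is the same as an odd Lie superalgebra on $\Pi L$. For the remaining three I would fix a homogeneous basis $(e_i)$ of $L$ and write $[e_i,e_j]=e_k\,c^k_{ij}$; the bracket is then recorded by the constants $c^k_{ij}$, whose only constraints are the graded antisymmetry~\eqref{eq.lieantisym} and the graded Jacobi identity~\eqref{eq.liejac}. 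The structural remark used throughout is that the prescribed \emph{weights} ($+1$ for the field, $-1$ for the two brackets) force each object to be homogeneous of the lowest nontrivial polynomial degree, i.e.\ of exactly the linear/bilinear shape carrying the $c^k_{ij}$; this is what turns each correspondence into a genuine bijection rather than a mere inclusion.

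Next I would treat the homological field on $\Pi L$. On $\Pi L$ the coordinates $\xi^i$ have weight $1$ and parity $\tilde e_i+1$. A weight $+1$ odd vector field is necessarily of the form $Q=\tfrac12\,\xi^i\xi^j\,c^k_{ij}\,\der{}{\xi^k}$, since weight $+1$ means ``quadratic in $\xi$ times one $\partial_\xi$'' and no other monomials are available; conversely such a $Q$ is automatically odd (a short parity count gives parity $1$), and the antisymmetry of $\xi^i\xi^j$ selects exactly the graded-antisymmetric part of $c^k_{ij}$, matching~\eqref{eq.lieantisym} after the parity shift. Since $Q$ is odd, $Q^2=\tfrac12[Q,Q]$, and expanding $[Q,Q]$ produces precisely the quadratic relation on the $c^k_{ij}$ that is the graded Jacobi identity~\eqref{eq.liejacclass}; thus $Q^2=0$ is equivalent to $L$ being a Lie superalgebra. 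This is the Chevalley--Eilenberg picture, and $Q$ is basis-independent because it is built from the bracket tensor.

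Then I would treat the two Poisson structures on the duals together, invoking the Theorem of the previous subsection relating Poisson brackets to master functions. On $L^*$ the coordinates $u_i$ dual to $e_i$ have weight $1$ and parity $\tilde e_i$; the Lie--Poisson prescription $\{u_i,u_j\}=c^k_{ij}\,u_k$ extended by the Leibniz rule is even and has weight $-1$, since the bracket of two weight-$1$ functions has weight $1$. By the Theorem this bracket is encoded by an even master function $P\in\fun(\Pi T^*(L^*))$ with $\lsch P,P\rsch=0$; the weight-$(-1)$ condition forces $P$ to be linear in the base coordinates $u_i$ and quadratic in the momenta (the Lie--Poisson form), and $\lsch P,P\rsch=0$ unwinds to the same graded Jacobi identity. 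The odd case on $\Pi L^*$ is strictly parallel: the parity-reversed coordinates $\t_i$ have weight $1$ and parity $\tilde e_i+1$, the odd bracket $\{\t_i,\t_j\}=\pm\,c^k_{ij}\,\t_k$ has weight $-1$, it is encoded by an odd master function $H\in\fun(T^*(\Pi L^*))$ with $(H,H)=0$, and again the weight constraint pins $H$ to the linear-in-base, quadratic-in-momenta shape while $(H,H)=0$ reproduces Jacobi. Hence each of $L^*$ and $\Pi L^*$ carries its stated structure exactly when the $c^k_{ij}$ define a Lie superalgebra.

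The content of the argument therefore lies entirely in sign bookkeeping and in the three parallel computations showing that $Q^2=0$, $\lsch P,P\rsch=0$ and $(H,H)=0$ all collapse to the single graded Jacobi identity. \textbf{I expect the main obstacle to be precisely this graded sign bookkeeping}: one must check that the (anti)symmetry imposed on $c^k_{ij}$ by $\xi^i\xi^j$, by $u_iu_j$, and by $\t_i\t_j$ agree after the parity shifts, and that the three quadratic master relations expand with matching signs to~\eqref{eq.liejacclass}. A secondary, conceptually cleaner point I would make explicit is that the weight gradings are exactly what cut the \emph{a priori} much larger spaces of homological fields and of Poisson/Schouten tensors down to the linear (Lie) locus, which is what makes the four pictures literally the same object seen through parity reversal and the canonical identifications, rather than merely related constructions.
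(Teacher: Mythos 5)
Your proposal is correct and follows essentially the same route as the paper, which states this proposition as a standard fact and simply records the realizing formulas: the weight $+1$ homological field $Q=\tfrac12\,\x^i\x^jQ_{ji}^k\der{}{\x^k}$ on $\Pi L$ with Jacobi $\Leftrightarrow Q^2=0$, and the linear (weight $-1$) Lie--Poisson and Lie--Schouten brackets $\{x_i,x_j\}=(-1)^{\jt}Q_{ij}^kx_k$ and $\{\h_i,\h_j\}=(-1)^{\itt}Q_{ij}^k\h_k$ on the duals. Your structure-constant bookkeeping, the observation that the weight constraints pin each object to exactly the quadratic/linear Lie shape, and the reduction of $Q^2=0$, $\lsch P,P\rsch=0$, $(H,H)=0$ to the single graded Jacobi identity are precisely the content behind those formulas.
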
 
The   brackets on $L^*$ and $\Pi L^*$ are the \emph{Lie--Poisson} and \emph{Lie--Schouten brackets}, respectively. A $Q$-structure of weight $+1$ is give by an odd vector field on $\Pi L$,
\begin{equation*}
    Q=\frac{1}{2}\,\x^i\x^jQ_{ji}^k\der{}{\x^k}\,.
\end{equation*}
It defines a Lie bracket of elements of $L$ by the formula
\begin{equation*}
    \io_{[u,v]} =(-1)^{\ut}[[Q,\io_u],\io_v]
\end{equation*}
where $\io_u=(-1)^{\ut} u^i\lder{}{\x^i}$ for $u=u^ie_i\in L$. The Jacobi identity for the bracket in $L$ is equivalent to $Q^2=0$. For the basis elements we obtain
\begin{equation*}
    [e_i,e_j]= (-1)^{\jt}Q_{ij}^ke_k\,.
\end{equation*}
The Lie--Poisson and Lie--Schouten brackets are defined, respectively, by
\begin{equation*}
    \{x_i,x_j\}= (-1)^{\jt}Q_{ij}^kx_k\,.
\end{equation*}
and
\begin{equation*}
    \{\h_i,\h_j\}= (-1)^{\itt}Q_{ij}^k\h_k\,.
\end{equation*}

In this language,  Lie algebra homomorphisms are  described as follows.
\begin{proposition}[Lie  superalgebra homomorphism in different manifestations]
A homomorphism of  Lie superalgebras   $\f\co L\to K$ in terms of structures on neighbors  is  equivalent to : 
\begin{itemize}
  \item A homomorphism of  odd Lie superalgebras $\Pi L\to \Pi K$;
  \item A linear  $Q$-map $\Pi L\to \Pi K$;
  \item A linear Poisson map   $K^*\to L^*$ for the Lie--Poisson brackets;
  \item A linear Poisson map   $\Pi K^*\to \Pi L^*$ for the Lie--Schouten brackets.
\end{itemize}
\end{proposition}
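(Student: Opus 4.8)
The plan is to treat all four items uniformly, using the previous proposition, which builds every structure on the neighbours of $L$ out of the \emph{same} structure constants $Q_{ij}^k$. In each manifestation I will write down the linear map induced by a linear $\f\co L\to K$ in the preferred linear coordinates, impose the relevant morphism condition (bracket preservation, $Q$-relatedness, or the Poisson-map property), and observe that after matching the coefficients of the quadratic coordinate monomials it collapses to a single bilinear relation on the $Q_{ij}^k$, namely the one encoding $\f([e_i,e_j])=[\f(e_i),\f(e_j)]$. Since that relation is exactly the homomorphism condition, and conversely any weight-preserving (``linear'') map of the requisite type is recovered from its matrix $\f^a_i$ and hence from such a $\f$, this establishes the equivalence. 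The first item is essentially tautological: since $[\P u,\P v]:=\P[u,v]$ by~\eqref{eq.liepi}, the parity-reversed map $\P\f$ defined by $\P\f(\P u):=\P(\f(u))$ preserves the odd bracket if and only if $\f$ preserves the even one.

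For the second item, let $\x^i$ and $\h^a$ be the odd linear coordinates on $\P L$ and $\P K$, and let $\f$ have matrix $\f^a_i$, so that the induced supermanifold map $\P L\to\P K$ is $\h^a=\f^a_i\x^i$ (linear relative to the preferred atlases). Applying the relatedness criterion~\eqref{eq.relatedvfs} to $Q_L=\tfrac12\x^i\x^j(Q_L)_{ji}^k\der{}{\x^k}$ and $Q_K=\tfrac12\h^b\h^c(Q_K)_{cb}^a\der{}{\h^a}$ gives, on the left, $\tfrac12\x^i\x^j(Q_L)_{ji}^k\f^a_k$ and, on the right, $Q_K^a$ evaluated at $\h=\f\x$, i.e. $\tfrac12\f^b_i\x^i\f^c_j\x^j(Q_K)_{cb}^a$. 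Commuting the odd $\x$'s together past the matrix entries and comparing coefficients of $\x^i\x^j$ yields $(Q_L)_{ji}^k\f^a_k=\pm\,\f^b_i\f^c_j(Q_K)_{cb}^a$, which is the structure-constant form of $\f([e_j,e_i])=[\f(e_j),\f(e_i)]$.

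For the third and fourth items the arrow is reversed: a linear $\f\co L\to K$ induces the dual map $\f^*\co K^*\to L^*$, whose pullback sends the linear coordinate $x_i$ on $L^*$ to $\f^a_i y_a$ on $K^*$ (here $y_a$ are the coordinates on $K^*$), and similarly for the parity-reversed map $\P K^*\to\P L^*$. Imposing that the pullback be a homomorphism for the Lie--Poisson brackets $\{x_i,x_j\}=(-1)^{\jt}(Q_L)_{ij}^k x_k$ and $\{y_a,y_b\}=(-1)^{\bt}(Q_K)_{ab}^c y_c$, one computes $(\f^*)^*\{x_i,x_j\}=(-1)^{\jt}(Q_L)_{ij}^k\f^a_k y_a$ on one side and, using the sign-free bilinearity~\eqref{eq.liebilineps} together with~\eqref{eq.oddcomma}, $\{\f^a_i y_a,\f^b_j y_b\}=\pm\,\f^a_i\f^b_j(-1)^{\bt}(Q_K)_{ab}^c y_c$ on the other; matching coefficients of $y_c$ again reproduces the structure-constant identity. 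The computation for the Lie--Schouten brackets on $\P K^*\to\P L^*$ is identical up to an overall shift of parities.

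The only real difficulty is bookkeeping. One must track the Koszul signs generated by commuting odd coordinates past the (possibly odd) matrix entries $\f^a_i$, and keep straight that the two geometric $Q$-pictures are covariant ($\P L\to\P K$) while the two Poisson pictures are contravariant ($K^*\to L^*$, $\P K^*\to\P L^*$). Once the sign conventions of the preceding proposition are fixed these signs are all forced, and the equivalence is a genuine bijection because the ``linear'' (weight-preserving) hypothesis guarantees that each morphism is encoded by the single matrix $\f^a_i$, which is interchangeably the matrix of $\f$, of $\f^*$, and of the two parity-reversed maps.
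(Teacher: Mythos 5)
Your proposal is correct, and it supplies exactly the argument the paper leaves implicit: the paper states this proposition without proof, as a recollection of standard facts, relying on the coordinate formulas of the preceding proposition (the $Q$-field $Q=\tfrac12\x^i\x^jQ_{ji}^k\lder{}{\x^k}$, the bracket $[e_i,e_j]=(-1)^{\jt}Q_{ij}^ke_k$, and the Lie--Poisson/Lie--Schouten brackets $\{x_i,x_j\}=(-1)^{\jt}Q_{ij}^kx_k$, $\{\h_i,\h_j\}=(-1)^{\itt}Q_{ij}^k\h_k$) to make all four morphism conditions visibly collapse to the same structure-constant identity. Your reduction of each manifestation to that single identity, together with the observation that a linear (weight-preserving) map in any of the four pictures is encoded by one matrix $\f^a_i$, is precisely this intended argument; leaving the Koszul signs as $\pm$ is acceptable here, since the paper's sign conventions (in particular \eqref{eq.liebilineps}, \eqref{eq.oddcomma} and the remark that the choices are ``fixed by the requirement of linearity'') are designed so that these signs come out consistently.
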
 

Recall that a Poisson map between Poisson manifolds $M_1$ and $M_2$ (with Poisson brackets of the same parity) is defined as a smooth map $F\co M_1\to M_2$ such that the corresponding Poisson tensors are $F$-related.  For contravariant tensor fields it is defined    similarly  to vector fields.  One can easily see that this is equivalent to the condition that the pullback of functions preserves Poisson brackets.
 
This picture carries through to Lie algebroids. Recall that a \emph{Lie algebroid} with base $M$ is a vector bundle $E\to M$ for which the space of sections $\fun(M,E)$ is endowed with a structure of a Lie superalgebra so that the bracket of sections is related with the multiplication by functions on the base by the Leibniz formula
\begin{equation}\label{eq.leibliealg}
    [u,fv]=a(u)(f)\,v +(-1)^{\ut\ft}f[u,v]
\end{equation}
where $a\co E\to TM$ is a vector bundle morphism called \emph{anchor}. (See e.g.~\cite{mackenzie:book2005}. We define Lie algebroids in the super setting, but it makes no difference.) We can consider the vector bundles over $M$ which are the neighbors of $E$. 

\begin{proposition}[Equivalent manifestations of a Lie algebroid]
A  Lie algebroid structure in   $E$ is  equivalent to the following algebraic and geometric structures on its neighbors:
\begin{itemize}
  \item An odd Lie algebroid structure in a vector space $\Pi E$;
  \item A homological vector field of weight $+1$ on the supermanifold $\Pi E$;
  \item An even Poisson bracket of weight $-1$ on the supermanifold $E^*$;
  \item An odd Poisson bracket of weight $-1$ on the supermanifold $\Pi E^*$.
\end{itemize}
\end{proposition}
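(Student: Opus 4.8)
The plan is to establish the four equivalences by the same template already used for the Lie superalgebra proposition, the only genuinely new ingredient being the base $M$ together with the anchor, which manifests itself as one extra term in each structure. I would fix local coordinates $x^a$ on $M$ and a local frame $(e_i)$ of $E$, and let $\x^i$, $p_i$, $\h_i$ denote the induced fiber coordinates on $\Pi E$, $E^*$, $\Pi E^*$ respectively (so $\tilde\x^i=\tilde\imath+1$, $\tilde p_i=\tilde\imath$, $\tilde\h_i=\tilde\imath+1$). The linear structure equips each neighbor with a \emph{weight}: base coordinates have weight $0$ and fiber coordinates have weight $+1$. As in the Lie superalgebra case, the passage $E\rightsquigarrow\Pi E$ to an odd Lie algebroid (reversing fiber parity while keeping the same structure functions) is purely formal, by the rule \eqref{eq.liepi}; so it suffices to exhibit the remaining three structures and to show that each one is equivalent to a Lie algebroid structure on $E$.

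The central object is a homological vector field $Q$ of weight $+1$ on $\Pi E$. Counting weights, the most general odd vector field of weight $+1$ has exactly two types of terms,
\begin{equation*}
    Q=\x^i Q_i^a(x)\,\der{}{x^a}+\tfrac12\,\x^i\x^jQ_{ji}^k(x)\,\der{}{\x^k}\,,
\end{equation*}
(up to signs fixed by our conventions), so such a $Q$ is the same data as two families of structure functions $Q_i^a(x)$ and $Q_{ij}^k(x)$. First I would read off the geometric content: setting $a(e_i):=Q_i^a\der{}{x^a}$ defines a bundle map $a\co E\to TM$, and the derived bracket $\io_{[u,v]}=(-1)^{\tilde u}[[Q,\io_u],\io_v]$ (with $\io_u=(-1)^{\tilde u}u^i\der{}{\x^i}$) produces a bracket on sections with $[e_i,e_j]=(-1)^{\jt}Q_{ij}^ke_k$. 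Because $Q$ now contains the anchor term $\x^iQ_i^a\der{}{x^a}$, which differentiates the base-dependent coefficients $u^i(x)$, this derived bracket automatically satisfies the Leibniz rule \eqref{eq.leibliealg} with anchor $a$. I would then compute the vector field $[Q,Q]$ and match it by its derivative content: the part along $\der{}{x^a}$ (quadratic in $\x$) gives that $a$ is a bracket homomorphism, $a([u,v])=[a(u),a(v)]$, while the part along $\der{}{\x^k}$ (cubic in $\x$) gives the graded Jacobi identity for the bracket of sections. Thus $Q^2=0$ is equivalent to the full set of Lie algebroid axioms.

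It remains to translate this into the two Poisson pictures. On $E^*$ I would define an even bracket of weight $-1$ by
\begin{equation*}
    \{x^a,x^b\}=0\,,\qquad \{p_i,x^a\}=Q_i^a(x)\,,\qquad \{p_i,p_j\}=(-1)^{\jt}Q_{ij}^k(x)\,p_k\,,
\end{equation*}
extended by Leibniz, and on $\Pi E^*$ the parity-shifted odd analog with $\{\h_i,\h_j\}=(-1)^{\itt}Q_{ij}^k(x)\,\h_k$ and $\{\h_i,x^a\}=\pm Q_i^a(x)$. Each bracket lowers the total fiber degree by one, hence has weight $-1$; the new feature relative to the Lie superalgebra case is precisely the nonvanishing bracket $\{p_i,x^a\}=Q_i^a$ pairing a fiber coordinate with a base coordinate, which \emph{is} the anchor. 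Invoking the theorem relating a Poisson bracket of either parity to its master (anti)Hamiltonian, the Jacobi identity for each of these brackets is equivalent to a single quadratic equation in the structure functions, and this is exactly the equation $Q^2=0$ obtained above; this yields the mutual equivalence of all four manifestations, the underlying canonical isomorphisms of neighbors being weight-preserving.

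I expect the main obstacle to be the bookkeeping in $Q^2=0$: one must verify that this single equation simultaneously encodes the anchor-homomorphism property and the graded Jacobi identity, and that the cross terms mixing $\der{}{x^a}$ and $\der{}{\x^k}$ produce no extra constraints. In the super setting this requires careful tracking of Koszul signs. One must also check that the anchor term $\x^iQ_i^a\der{}{x^a}$ is genuinely \emph{forced} by the weight-$+1$ and oddness conditions rather than being optional, so that conversely every homological $Q$ of weight $+1$ on $\Pi E$ delivers a true anchor and not merely a frame bracket.
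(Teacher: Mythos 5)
Your proposal is correct and follows essentially the same route as the paper, which centres Vaintrob's characterization by a weight-$+1$ homological vector field $Q$ on $\Pi E$, recovers the bracket and anchor via the derived-bracket formulas $\io_{[u,v]}=(-1)^{\ut}[[Q,\io_u],\io_v]$ and $a(u)(f)=[Q,\io_u](f)$, and treats the brackets on $E^*$ and $\Pi E^*$ as the linear (weight $-1$) Lie--Poisson and Lie--Schouten brackets built from the same structure functions. The paper only sketches this (citing Vaintrob and skipping the dual-bundle formulas), so your explicit decomposition of $Q^2=0$ into the anchor-homomorphism and Jacobi parts and your coordinate definition of the dual brackets simply fill in details along the paper's intended lines.
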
 
(We skip the definition of an `odd Lie algebroid'. It should be clear from the statement.) 
`Weight' for functions on vector bundles is defined as the degree in fiber coordinates. 
The characterization of Lie algebroids in terms of homological vector fields is due to Vaintrob~\cite{vaintrob:algebroids}. In local coordinates,
\begin{equation*}
    Q=\x^iQ_i^a\der{}{x^a}+ \frac{1}{2}\,\x^i\x^jQ_{ji}^k\der{}{\x^k}\,.
\end{equation*}
Similarly to the above, a map $u\mapsto \io_u$ defines an odd isomorphism of sections of $E$ and vector fields on $\Pi L$ of weight $-1$.
The brackets and the anchor  on sections are given by
\begin{equation*}
    \io_{[u,v]} =(-1)^{\ut}[[Q,\io_u],\io_v]\,, \quad a(u)(f)=[Q,\io_u](f)\,.
\end{equation*}
We skip the formulas for the Lie--Poisson and Lie--Schouten brackets on $E^*$ and $\Pi E^*$ (see e.g.~\cite{tv:graded}\cite{tv:qman-mack}) because below we consider a more general (homotopy) case.  
 
Now we would like to see how morphisms of Lie algebroids look like in different manifestations. Here is some subtlety. For Lie algebroids over the same base, there is an obvious notion of a morphism, which is a fiberwise map over  fixed base preserving brackets and anchors. Higgins and Mackenzie defined a morphism of Lie algebroids over different bases~\cite{mackenzie_and_higgins:algebraic}, see~\cite[\S 4.3]{mackenzie:book2005}, as a morphism of the underlying vector bundles satisfying certain conditions, which are non-obvious because in this case there is no   mapping of sections.  As it has turned out, the simplest description is via $Q$-structures: a vector bundle morphism %(over different bases)
\begin{equation*}
    \begin{CD} E_1@>{\bar\f}>> E_2\\
                @VVV         @VVV\\
                M_1@>{\f}>> M_2 \,,
    \end{CD}
\end{equation*}
is a \emph{Lie algebroid morphism} if and only if the induced map $\bar \f^{\Pi}$ of the parity-reversed bundles 
\begin{equation*}
    \begin{CD} \Pi E_1@>{\bar\f^{\Pi}}>> \Pi E_2\\
                @VVV         @VVV\\
                M_1@>{\f}>> M_2 \,,
    \end{CD}
\end{equation*}
is a $Q$-morphism.
However, to avoid complications with dualization, we shall restrict ourselves here to morphisms over fixed base.\footnote{\,A morphism of vector bundles over different bases induces on the dual bundles a ``comorphism''. See e.g.~\cite{mackenzie_and_higgins:duality}.  There is also a notion of a Lie algebroid comorphism (ibid., also~\cite[\S 4.3]{mackenzie:book2005}),   parallel with the notion of morphism and  coinciding with it  over   a fixed base. To simplify our task, we do not go into that direction.} The description in terms of $Q$-structures remains valid, but there are also clear descriptions for two other manifestations. The statement is completely similar to the case of Lie (super)algebras.

\begin{proposition}[Lie  algebroid  morphism in different manifestations]
For Lie algebroids over the same base, the following are equivalent:
\begin{itemize}
  \item A Lie  algebroid  morphism (over fixed base), i.e. a vector bundle morphism with fixed base $E_1\to E_2$ preserving brackets of sections and anchors;
  \item A fiberwise linear  $Q$-map $\Pi E_1\to \Pi E_2$;
  \item A fiberwise linear Poisson map   $E_2^*\to E_1^*$ for the Lie--Poisson brackets;
  \item A fiberwise linear Poisson map   $\Pi E_2^*\to \Pi E_1^*$ for the Lie--Schouten brackets.
\end{itemize}
\end{proposition}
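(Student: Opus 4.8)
The plan is to collapse all four descriptions onto a single pair of coordinate relations for the morphism, using that the equivalence of the first two items is essentially already in hand. Indeed, the Higgins--Mackenzie description of Lie algebroid morphisms via $Q$-structures recalled just above, applied with the base map $\f=\id_M$, gives at once that a fiberwise linear $\bar\f\co E_1\to E_2$ over the fixed base is a Lie algebroid morphism (preserving brackets of sections and anchors) if and only if $\bar\f^{\Pi}\co\Pi E_1\to\Pi E_2$ is a $Q$-map; so it remains to match the $Q$-map condition with the two Poisson-map conditions. I would fix adapted coordinates: $x^a$ on $M$, fiber coordinates $\x^i$ on $E_1$ and $\h^\a$ on $E_2$, so that $\bar\f$ reads $\h^\a=\F^\a_i(x)\x^i$. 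The point I would stress is that the four relevant maps are all governed by the same matrix $\F(x)$: the parity reversal $\bar\f^{\Pi}$ uses $\F$ directly, while the dual $\bar\f^*\co E_2^*\to E_1^*$ and the $\Pi$-dual $\bar\f^{*\Pi}\co\Pi E_2^*\to\Pi E_1^*$ use its transpose (with the appropriate parity shift). Thus on generators the pullbacks act by $\h^\a\mapsto\F^\a_i\x^i$ in the first case and by $p_i\mapsto\F^\a_i q_\a$ on the dual fiber coordinates in the last two.

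Next I would translate each condition by evaluating on generators. Writing the homological fields as $Q_1=\x^iQ^{(1)a}_i\,\p_{x^a}+\tfrac12\,\x^i\x^jQ^{(1)k}_{ji}\,\p_{\x^k}$ and $Q_2=\h^\a Q^{(2)a}_\a\,\p_{x^a}+\tfrac12\,\h^\a\h^\b Q^{(2)\g}_{\b\a}\,\p_{\h^\g}$, testing $Q$-relatedness on $x^a$ produces the anchor-compatibility $Q^{(1)a}_i=\F^\a_i\,Q^{(2)a}_\a$, while testing on $\h^\g$ produces, precisely because $\F$ depends on $x$, a second relation combining the structure functions with a term of the form $\x\x\,Q^{(1)a}\,\p_{x^a}\F$ --- this is exactly the classical ``bracket plus anchor-action'' morphism condition over a fixed base. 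I would then check that demanding $\bar\f^*$ (respectively $\bar\f^{*\Pi}$) preserve the weight $-1$ Lie--Poisson (respectively Lie--Schouten) bracket reproduces these same two relations: preservation of the anchor bracket $\{p_i,x^a\}\sim Q^{(1)a}_i$ recovers the anchor-compatibility, and preservation of $\{p_i,p_j\}$ recovers the second relation, the key mechanism being that the non-vanishing anchor brackets $\{p_i,x^a\}$, acting through the $x$-dependence of $\F$, generate exactly the required derivative term. Since all four conditions collapse to the same pair of equations, they are equivalent.

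The main obstacle I anticipate is not conceptual but bookkeeping: keeping the $\Z$-graded signs straight and, above all, getting the direction reversal under dualization right, so that the transpose of $\F$ enters the two Poisson-map conditions with the correct sign and the anchor term of the Lie--Poisson bracket lands on the proper side. I would control this by fixing every sign convention through the generator formulas of the manifestation correspondence recalled above, and by verifying only the two representative identities on $\{p_i,x^a\}$ and $\{p_i,p_j\}$; by the weight-$(-1)$ property together with bilinearity, antisymmetry and the Leibniz/derivation rules, all remaining brackets are then determined, so these two checks suffice.
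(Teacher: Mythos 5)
Your proof is correct, but note that the paper itself offers no proof of this proposition: it appears in the recollection section and is asserted as the fixed-base specialization of known facts --- the $Q$-manifold characterization of Lie algebroid morphisms (Vaintrob, Higgins--Mackenzie) together with the remark that the statement is ``completely similar to the case of Lie (super)algebras'', whose analogous proposition is likewise stated without proof. Your coordinate verification is therefore a genuinely self-contained argument where the paper relies on citation, and its strategy is sound: $Q$-relatedness tested on the generators $x^a$ and $\h^\g$, and bracket preservation under the dual (resp.\ $\Pi$-dual) pullback tested on $\{p_i,x^a\}$ and $\{p_i,p_j\}$, each yield the anchor-compatibility relation $Q^{(1)a}_i=\F^\a_i Q^{(2)a}_\a$ together with the bracket relation containing the derivative term $Q^{(1)a}_i\,\p\F^\g_j/\p x^a$. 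Two points deserve explicit mention. First, the ``second'' relations produced by the different manifestations agree only \emph{modulo} the first: in the Poisson-map computation the derivative term arises as $\F^\a_i Q^{(2)a}_\a\,\p\F^\b_j/\p x^a$ and must be rewritten via anchor compatibility before it matches the $Q$-map relation; this is harmless, since each manifestation imposes both equations simultaneously, but the collapse to ``the same pair'' should be stated with that caveat. Second, your appeal to Higgins--Mackenzie with $\f=\id_M$ for the equivalence of the first two items quietly identifies their general-base notion of morphism with fixed-base bracket/anchor preservation; this identification is standard (and implicit in the paper), and in any case your own computation on $x^a$ and $\h^\g$ re-derives that equivalence directly, so the citation could even be dropped. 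Your reduction to generators is justified exactly as you say: for weight $-1$ brackets, the Leibniz rule and the forced vanishing of $\{x^a,x^b\}$ (a function of negative weight) determine all brackets from the two representative ones, so the two checks suffice.
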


\subsection{$\Li$-algebras and $\Li$-algebroids. $\Pin$- and $\Si$-algebras.  Manifestations of $\Li$-structure}

Now we are going to recall brackets that have many arguments, not just binary as before. `Central comma' does not make sense for them, hence before moving forward we explain an alternative sign convention for   odd brackets. For concreteness, consider the case of $\Pi L$ for a Lie superalgebra $L$. Denote $V:=\Pi L$. If a bracket for $V=\Pi L$ is defined by
\begin{equation*}
    [\Pi u, \Pi v]:=\Pi [u,v]
\end{equation*}
as in~\eqref{eq.liepi}, it satisfies~\eqref{eq.liebilineps}, \eqref{eq.lieantisymeps} and \eqref{eq.liejaceps} with $\e=1$.  Define a bracket in $V$ slightly differently, by
\begin{equation}
    [\Pi u,\Pi v]':=(-1)^{\ut}\Pi [u,v]\,.
\end{equation}
(We temporarily use notation with the dash. Later we shall drop it.) The two versions are related by
\begin{equation}
    [\a,\b]':=(-1)^{\tilde \a+1}  [\a,\b]\,,
\end{equation}
for $\a,\b\in V$. One can see that this version of an odd bracket in $V$  has the following properties:
it is bilinear, in the form
\begin{equation}\label{eq.oliebilin}
    [\la \a,\b]'=(-1)^{\lat}\la [\a,\b]',  \quad [\a,\b\la]'= [\a,\b]'\la\,,
\end{equation}
symmetric:
\begin{equation}\label{eq.olieasym}
    [\a,\b]'=(-1)^{\att\btt}[\b,\a]'\,,
\end{equation}
and satisfies the following form of the Jacobi identity:
\begin{equation}\label{eq.oliejac}
    [\a,[\b,\g]]=(-1)^{\att+1}[[\a,\b],\g]+(-1)^{(\att+1)(\btt+1)}[\b,[\a,\g]]\,.
\end{equation}
Bilinearity in the form~\eqref{eq.oliebilin}  means that parity `sits' at the opening bracket, which should be regarded as an odd symbol. The Jacobi identity in the form~\eqref{eq.oliejac} is consistent with that. It can be re-written also in the cyclic form
\begin{equation}\label{eq.oddliejaccycl}
    (-1)^{\att(\gtt+1)}[\a,[\b,\g]']' + (-1)^{\gtt(\btt+1)}[\g,[\a,\b]']' + (-1)^{\btt(\att+1)}[\b,[\g,\a]']' =0
\end{equation}
and   as 
\begin{equation}
    [[\a,\b]',\g]'+(-1)^{\btt\gtt}[[\a,\g]',\b]'+(-1)^{\att(\btt+\gtt)+\btt\gtt}[[\g,\b]',\a]'=0\,,
\end{equation}
where  the sum over   $(2,1)$-shuffles. It is the latter form that conveniently extends to the  homotopy  case  which we  shall now  introduce.

Recall the definition of  $\Li$-algebras (also known as strongly homotopy Lie algebras or SHLAs) due to Lada and Stasheff~\cite{lada:stasheff}. (Examples   appeared in physics literature, notably Zwiebach~\cite{zwiebach:93csft}).   There are two equivalent versions that differ by parities and we need both. (We use $\Z$-grading, not $\ZZ$-grading,  as in Lada--Stasheff.)
\begin{definition}[\emph{$\Linf$-algebra:  antisymmetric version}]
An \emph{$\Li$-algebra} is a  vector space $L=L_0\oplus L_1$ endowed with a sequence of multilinear operations
\begin{equation*}
    [-,\ldots,-]\co \underbrace{L\times \ldots \times L}_{\text{$n$ times}} \to L \quad \text{(for $n=0,1,2,\ldots $)}
  \end{equation*}
such that
  \begin{itemize}
    \item the parity of the $n$th bracket is $n\mod 2$;
    \item all brackets are antisymmetric (in $\Z$-graded sense);
    \item `generalized Jacobi identities' with $n$ arguments hold for all~$n=0,1,2,3,\ldots$\,:
    \begin{equation}\label{eq.genjacanti}
        \sum_{r+s=n}(-1)^{rs}\sum_{\text{$(r,s)$-shuffles}}   (-1)^{\a} \sign \s [[u_{\s(1)},\ldots,u_{\s(r)}], \ldots , u_{\s(r+s)}]=0\,.
    \end{equation}
  \end{itemize}
Here $(-1)^{\a}=(-1)^{\a(\s; \ut_,\ldots,\ut_n)}$  is the Koszul sign, i.e. the sign arising from  a given permutation of commuting symbols of indicated parities. Multilinearity means
\begin{equation}\label{eq.nlinear}
    [u_1,\ldots,\la u_k,\ldots,u_n]=(-1)^{\lat(n+\ut_1+\ldots+\ut_{k-1})} \la [u_1,\ldots, u_k,\ldots,u_n]\,,
\end{equation}
and antisymmetry means  a flip of adjacent arguments  $u_k,u_{k+1}$ giving the sign $-(-1)^{\ut_k\ut_{k+1}}$.
\end{definition}

We shall refer to a bracket with $n$ arguments as an `$n$-bracket'. Note that the definition includes a $0$-bracket, which is a distinguished even element in $L$. (It is often assumed to be zero and when it is not zero, such an $\Li$-algebra is sometimes called `curved' or `with background'.)

\begin{example} Denote $[\void]=:\O$ (an even element of $L$) and $[u]=:Du$. The `lower' generalized Jacobi identities (for $n=0,1,2$) can be shown to be
\begin{align*}
    D\O&=0\,,\\
    D^2u&=-[\O,u]\,,\\
    D[u,v]&=[Du,v]+(-1)^{\ut}[u,Dv]-[\O,u,v]\,.
\end{align*}
\end{example}

The parallel version is as follows.
\begin{definition}[\emph{$\Linf$-algebra:  symmetric version}]
An \emph{$\Li$-algebra} (in symmetric version) is a  vector space $V=V_0\oplus V_1$ endowed with a sequence of multilinear operations
\begin{equation*}
    [-,\ldots,-]\co \underbrace{V\times \ldots \times V}_{\text{$n$ times}} \to V \quad \text{(for $n=0,1,2,\ldots $)}
  \end{equation*}
such that
  \begin{itemize}
    \item all brackets are odd;
    \item all brackets are symmetric (in $\Z$-graded sense);
    \item `generalized Jacobi identities' with $n$ arguments hold for all~$n=0,1,2,3,\ldots$ in the form 
    \begin{equation}\label{eq.genjacsym}
        \sum_{r+s=n} \sum_{\text{$(r,s)$-shuffles}}   (-1)^{\a}[[u_{\s(1)},\ldots,u_{\s(r)}], \ldots , u_{\s(r+s)}]=0\,.
    \end{equation}
  \end{itemize}
Here $(-1)^{\a}=(-1)^{\a(\s; \ut_,\ldots,\ut_n)}$  is the Koszul sign. Multilinearity means
\begin{equation}\label{eq.nlinearodd}
    [u_1,\ldots,\la u_k,\ldots,u_n]=(-1)^{\lat(1+\ut_1+\ldots+\ut_{k-1})} \la [u_1,\ldots, u_k,\ldots,u_n]\,,
\end{equation}
and  symmetry means  a flip of adjacent arguments $u_k,u_{k+1}$ giving the sign $+(-1)^{\ut_k\ut_{k+1}}$.
\end{definition}

For both versions, the opening bracket is the symbol that carries the parity. % of an operation. 
One can check that if $L$ is an $\Li$-algebra in the antisymmetric version, then the vector space $V=\Pi L$ will become  an $\Li$-algebra in the  symmetric version if the operations are defined by 
\begin{equation}\label{eq.relationbrack}
    [\Pi u_1,\ldots, \Pi u_n]= (-1)^{\e}\Pi [u_1,\ldots,u_n]\,, 
\end{equation}
where $\e=\sum \ut_k(n-k)$,  and vice versa. (E.g.   $[\Pi u,\Pi v]=(-1)^{\ut}\Pi [u,v]$.)

Now we recall $\Li$-algebroids (see e.g.~\cite{tv:higherpoisson}). As for $\Li$-algebras, there are two parallel versions, with antisymmetric or  symmetric brackets. To save space, we give here a definition only for the antisymmetric version (which directly generalizes ordinary Lie algebroids).

\begin{definition}
An  \emph{$\Li$-algebroid} over a base $M$ is a vector bundle $E\to M$ endowed with a sequence of brackets of parities $n$,
\begin{equation*}
    [-,\ldots,-]\co \underbrace{\fun(M,E)\times \ldots \times \fun(M,E)}_{\text{$n$ times}} \to \fun(M,E) 
  \end{equation*}
$n=0,1,2,\ldots $, where $\fun(M,E)$ is the space of sections,   making $\fun(M,E)$ an $\Li$-algebra (in the antisymmetric version), and a sequence of fiberwise multilinear antisymmetric maps
\begin{equation*}
    a_n(-,\ldots,-)\co  \underbrace{E\times_M \ldots \times_M E}_{\text{$n$ times}} \to TM\,,
  \end{equation*}
$n=0,1,2,\ldots $, of parities $n-1$, called \emph{(higher) anchors}, so that the brackets satisfy the following Leibniz identity with respect to multiplication by functions on $M$\,:
\begin{equation}\label{eq.highleib}
    [u_1,\ldots,u_{n}, fu_{n+1}]= a_n(u_1,\ldots,u_n)(f)u_{n+1} + (-1)^{\e} f [u_1,\ldots,u_{n}, u_{n+1}]\,,
\end{equation}
where $(-1)^{\e}=(-1)^{\ft(n+1+\ut_1+\ldots+\ut_{n})}$\,.
\end{definition}

(In particular, there is a $0$-anchor, which is a distinguished  odd vector field on $M$.)

The most efficient description of $\Li$-algebras and $\Li$-algebroids is again by homological vector fields.  

Consider a vector space $V=V_0\oplus V_1$ and a formal odd vector field $Q\in \Vect(V)$, 
\begin{equation}\label{eq.qexpansion}
    Q=Q_{-1}+Q_0+Q_1+Q_2+\ldots = \Bigl(Q^k + \x^iQ_i^k + \frac{1}{2}\,\x^i\x^jQ_{ji}^k + \frac{1}{3!} \x^i\x^j\x^l Q_{lji}^k +\ldots \Bigr)\der{}{\x^k}
\end{equation}
(subscript denotes weight), where $\x^i$ are linear coordinates on $V$.  Elements of both $V$ and $\Pi V$ can be identified with vector fields on $V$ of weight $-1$ by the maps $i\co V\to \Vect(V)$ and $\io\co \Pi V\to \Vect(V)$, $i(\a)=\a^i\lder{}{\x^i}$ if $\a=\a^i\e_i$ (we may simply identify $i(\a)=\a$) and   $\io(u)=(-1)^{\ut}u^i\lder{}{\x^i}$ if $u=u^ie_i$; here $e_i=\e_i\Pi$ and $\x^i$ are left coordinates, e.g. $\x^ie_i$ is invariant. (Note also  $i(\Pi u)=\io(u)$.) A vector field $Q$ defines brackets on $V$ and $\Pi V$ by
\begin{equation}\label{eq.brackoddlinf}
    [\a_1,\ldots,\a_n] =  [\ldots [Q, \a_1],\ldots, \a_n](0)
\end{equation}
and
\begin{equation}\label{eq.bracklinf}
    \io\bigl([u_1,\ldots,u_n]\bigr)=(-1)^{\e} [\ldots [Q,\io(u_1)],\ldots,\io(u_n)](0)\,,
\end{equation}
where $\e=\sum \ut_k(n-k)$. The brackets~\eqref{eq.brackoddlinf} are odd and symmetric, while the brackets~\eqref{eq.bracklinf} are antisymmetric and have alternating parities. They define   $\Linf$-algebra structures in $L=\Pi V$ (antisymmetric version) and $\Pi L=V$ (symmetric version) if and only if $Q^2=0$.

This immediately generalizes to $\Linf$-algebroids. If $E\to M$ is a vector bundle, consider the bundle $\Pi E\to M$ and vector fields on $\Pi E$. Again, the space of vector fields is $\ZZ$-graded,
\begin{equation*}
    \Vect(\Pi E)=\Vect_{-1}(\Pi E)\oplus \Vect_{0}(\Pi E)\oplus \Vect_{1}(\Pi E)\oplus \ldots
\end{equation*}
and there an odd isomorphism $\fun(M,E)\to \Vect_{-1}(\Pi E)$, 
\begin{equation*}
    u\mapsto \io(u)\,, \quad \text{where $\io(u)=(-1)^{\ut}u^i(x)\der{}{\x^i}$}
\end{equation*}
(same conventions about bases and coordinates as above). Let $Q$ be an odd formal vector field on $\Pi E$\,:
\begin{multline*}
    Q=Q_{-1}+Q_0+Q_1+ Q_2 + \ldots =  Q^k(x)\der{}{\x^k} + \left(Q^a(x)\der{}{x^a} + \x^iQ_i^k(x)\der{}{\x^k}\right) + \\
    \left(\x^iQ_i^a(x)\der{}{x^a} + \frac{1}{2}\,\x^i\x^jQ_{ji}^k(x)\der{}{\x^k}\right)+
    \left(\x^i\x^jQ_{ji}^a(x)\der{}{x^a} + \frac{1}{3!}\,\x^i\x^j\x^lQ_{lji}^k(x)\der{}{\x^k}\right)+
    \ldots
\end{multline*}
($x^a$ coordinates on the base, $\x^i$ coordinates in the fibers). We can define brackets and anchors for sections of $E$ as follows:
\begin{multline}\label{eq.bracklinfalgd}
    \io\bigl({[u_1,\ldots,u_n]}\bigr):= (-1)^{\e}[\ldots [Q,\io(u_1)],\ldots,\io(u_n)]_{-1}=\\
     (-1)^{\e}[\ldots [Q_{n-1},\io(u_1)],\ldots,\io(u_n)]\,,
\end{multline}
and
\begin{multline}\label{eq.anchlinfalgd}
   a_n(u_1,\ldots,u_n)(f):= (-1)^{\e} [\ldots [Q,\io(u_1)],\ldots,\io(u_n)]_{0}(f)=\\
   (-1)^{\e} [\ldots [Q_{n},\io(u_1)],\ldots,\io(u_n)]_{0}(f)\,,
\end{multline}
$f\in \fun(M)$. Here  the subscripts ${}_{-1}$ and ${}_{0}$ denote  the projections on the subspaces of   weights $-1$ and $0$, and the sign
 $(-1)^{\e}=(-1)^{\sum_k \ut_k(n-k)}$ is as above. One can check that these operations have the required parities and satisfy the multilinearity and antisymmetry conditions. It is useful to note that only one homogeneous component of $Q$ makes an input in each operation.

\begin{theorem} Operations~\eqref{eq.bracklinfalgd} and~\eqref{eq.anchlinfalgd} define in a vector bundle $E\to M$ a structure of an $\Linf$-algebroid over $M$ if and only if $Q^2=0$. 
\end{theorem}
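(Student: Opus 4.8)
The plan is to reduce the statement entirely to the equivalence between $Q^2=0$ and the generalized Jacobi identities, exploiting the fact already emphasized in the text that each homogeneous weight component of $Q$ feeds exactly one operation. First I would record the key structural observation: the map $\io\co \fun(M,E)\to \Vect_{-1}(\Pi E)$ is an (odd) isomorphism onto the weight $-1$ vector fields, so that the brackets~\eqref{eq.bracklinfalgd} and anchors~\eqref{eq.anchlinfalgd} are genuinely well-defined provided the right-hand sides land in weights $-1$ and $0$ respectively. Since $\io(u_k)$ has weight $-1$ and $[Q_{m},-]$ raises weight by $m$, an $n$-fold bracket $[\ldots[Q_m,\io(u_1)],\ldots,\io(u_n)]$ has weight $m-n$; projecting onto weight $-1$ picks out $m=n-1$ (the bracket) and projecting onto weight $0$ picks out $m=n$ (the anchor). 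This justifies the second equality in each of~\eqref{eq.bracklinfalgd} and~\eqref{eq.anchlinfalgd} and explains why the operations are well defined.

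Next I would verify the algebraic type of the operations, namely that the brackets~\eqref{eq.bracklinfalgd} have parity $n\bmod 2$, are multilinear over the ground field, and are antisymmetric in the $\Z$-graded sense; and that the anchors~\eqref{eq.anchlinfalgd} are fiberwise multilinear, antisymmetric, of parity $n-1$, and satisfy the higher Leibniz rule~\eqref{eq.highleib}. Parity and multilinearity are bookkeeping with the Koszul sign $(-1)^{\e}$, $\e=\sum_k\ut_k(n-k)$; antisymmetry under a flip of adjacent arguments $\io(u_k),\io(u_{k+1})$ follows because these are \emph{odd} vector fields and the ordinary graded commutator of the two innermost brackets is graded-symmetric in them, which after transport through $\io$ and the sign $(-1)^{\e}$ becomes the required $\Z$-graded antisymmetry. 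The Leibniz identity~\eqref{eq.highleib} is the one genuinely geometric check: substituting $fu_{n+1}$ and commuting $f$ past the iterated bracket produces exactly two terms, one where all of $Q$ differentiates $f$ (giving the weight-$0$ part, i.e. the anchor term $a_n(u_1,\ldots,u_n)(f)u_{n+1}$) and one where $f$ is left untouched (the $f[u_1,\ldots,u_n,u_{n+1}]$ term), with the stated sign. Here one uses that $\io(fu_{n+1})=f\,\io(u_{n+1})$ as operators and that only $Q$, not the $\io(u_k)$, can act on the function $f\in\fun(M)$.

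The heart of the proof is the equivalence with $Q^2=0$. The clean way is to pass to the \emph{symmetric} picture on $V=\Pi E$, where the bracket is $[\a_1,\ldots,\a_n]=[\ldots[Q,\a_1],\ldots,\a_n](0)$ with all $\a_k$ of weight $-1$, and to use the general fact (the non-algebroid version stated just above the theorem, and ultimately the derived-bracket formalism) that these operations satisfy the generalized Jacobi identities~\eqref{eq.genjacsym} precisely when $Q^2=0$. I would expand $0=\tfrac12[Q,Q]=Q^2$ and evaluate the resulting identity on arguments by repeatedly applying the graded Jacobi identity for the commutator of vector fields; reorganizing the nested commutators into sums over $(r,s)$-shuffles reproduces~\eqref{eq.genjacsym} term by term, and conversely the shuffle identities for all $n$ reassemble into the single equation $Q^2=0$ evaluated on an arbitrary number of weight-$(-1)$ fields. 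The algebroid refinement adds nothing new to this logic: because $Q$ now also has a horizontal (base-differentiating) part, the condition $Q^2=0$ simultaneously encodes the $\Li$-algebra axioms on sections (weight $-1$ output) and the compatibility of the higher anchors with the brackets (weight $0$ output), so splitting $Q^2=0$ by weight yields exactly the algebroid axioms. I expect the main obstacle to be purely bookkeeping: tracking the Koszul signs through the passage $\io\colon\fun(M,E)\leftrightarrow\Vect_{-1}(\Pi E)$ and matching them against the prefactor $(-1)^{\e}$ so that the shuffle signs in~\eqref{eq.genjacanti} come out correctly. Since both directions follow from evaluating the single identity $Q^2=0$ on weight-$(-1)$ fields and decomposing by weight, no step requires more than the graded Jacobi identity plus careful sign accounting, and I would relegate the detailed sign verification to a remark rather than grinding through it in the main argument.
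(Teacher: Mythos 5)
Your proposal is correct and in outline coincides with the paper's proof: the same preliminary verifications (well-definedness by weight counting; parities, multilinearity, antisymmetry; the Leibniz identity~\eqref{eq.highleib}; and $\fun(M)$-multilinearity of the anchors~\eqref{eq.anchlinfalgd}, established exactly by your observation that fields of the form $\io(u)$ kill functions pulled back from $M$), and the same reduction of the main point to the equivalence between the generalized Jacobi identities and $Q^2=0$. Where you diverge is in how that equivalence is obtained. The paper does not expand $Q^2$ by hand: it notes that in the decomposition $\Vect(\Pi E)=\Vect_{-1}(\Pi E)\oplus\Vect_{\geq 0}(\Pi E)$ both summands are Lie subalgebras and $\Vect_{-1}(\Pi E)$ is abelian --- precisely the hypotheses of the abstract higher derived brackets theorem of~\cite{tv:higherder} --- and then quotes that theorem. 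Your direct expansion into $(r,s)$-shuffle identities is in essence a re-proof of that cited result; it works, but all of the sign bookkeeping you propose to ``relegate to a remark'' is exactly the content of that theorem, so identifying the two-subalgebra/abelian structure and citing it is what the paper's route buys. One point you should not gloss over in the converse direction: the generalized Jacobi identities assert only that the weight $-1$ projections of the iterated brackets of $Q^2$ with the $\io(u_k)$ vanish, whereas $Q^2$ also has horizontal (base-differentiating) components, which such projections detect only through terms in which $Q^2$ differentiates the coefficients of one of the sections $u_j$. Concluding that $Q^2$ vanishes on the nose therefore requires a short polarization argument (take the $u_k$ locally constant to isolate and kill the vertical Taylor coefficients of $Q^2$, then vary the first derivatives of a single section to kill the horizontal ones); without some such step, ``reassembling the shuffle identities into $Q^2=0$'' is a leap, since in the abstract derived-brackets setting the Jacobi identities are equivalent only to the vanishing of all derived brackets of $Q^2$, not of $Q^2$ itself.
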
 
\begin{proof} In the direct sum decomposition
\begin{equation*}
    \Vect(\Pi E)= \Vect_{-1}(\Pi E)\oplus \Vect_{\geq 0}(\Pi E)
\end{equation*}
both   summands are Lie subalgebras and $\Vect_{-1}(\Pi E)$ is abelian.\footnote{\,Compare with general non-negatively graded $Q$-manifolds~\cite{tv:qman}.} Hence results of~\cite{tv:higherder} apply and the generalized Jacobi identities for the brackets~\eqref{eq.bracklinfalgd} hold if and only if $Q^2=0$. The fact that the Leibniz identity~\eqref{eq.highleib} is easily checked directly. One also need to check that~\eqref{eq.anchlinfalgd} is multilinear over $\fun(M)$ and thus defines a multilinear map of vector bundles. By antisymmetry, it is enough to check $a_n(u_1,\ldots,u_{n-1},fu_n)(g)=\pm [\ldots [[Q_n,\io(u_1)],\ldots,\io(u_{n-1}),\io(fu_n)](g)$, which has the form 
\begin{equation*}
   \pm [X_{+1},f\io(u_n)]g=\pm X_{+1}(f)\io(u_n)(g) \pm   f[X_{+1}, \io(u_n)](g)=\pm   f[X_{+1}, \io(u_n)](g)\,,
\end{equation*}
where the first term vanishes because vector fields of the form $\io(u)$ kill functions on $M$.
\end{proof}

We have finally reached homotopy analogs of even and odd Poisson algebras. 

The following is the analog of an even Poisson algebra.
\begin{definition} A $\Pinf$-algebra (or homotopy Poisson algebra) is a vector space $A=A_0\oplus A_1$ endowed with an even bilinear associative multiplication and a sequence of bracket operations
\begin{equation*}
    \{-,\ldots,-\}\co \underbrace{A\times \ldots \times A}_{\text{$n$ times}} \to A\,, 
  \end{equation*}
of parities $n \mod 2$,  $n=0,1,2,\ldots $, which define on $A$ an $\Linf$-algebra structure  in the antisymmetric version and which are multiderivations with respect to the associative product. 
\end{definition}

Similarly, the following is the analog of an odd Poisson  (=Schouten, Gerstenhaber) algebra.
\begin{definition} An $\Sinf$-algebra (or homotopy Schouten algebra) is a vector space $A=A_0\oplus A_1$ endowed with an even bilinear associative multiplication and a sequence of bracket operations
\begin{equation*}
    \{-,\ldots,-\}\co \underbrace{A\times \ldots \times A}_{\text{$n$ times}} \to A\,,
  \end{equation*}
$n=0,1,2,\ldots $, all odd,  which define on $A$ an $\Linf$-algebra structure  in the  symmetric version and which are multiderivations with respect to the associative product.
\end{definition}

When $A=\fun(M)$ for a supermanifold $M$, $\Pinf$- and $\Sinf$-structures on $M$ are described by ``master Hamiltonians'' (which unlike ordinary Poisson or Schouten structures do not have to be quadratic). 

Before giving the corresponding formulas, it will be convenient to re-define signs in the canonical Schouten bracket. If $\lsch P, R\rsch$ is the Schouten bracket in the algebra $\fun(\Pi T^*M)$ as defined in Example~\ref{ex.cansch}, set
\begin{equation}\label{eq.newcansch}
    [P,R]:=(-1)^{\Pt+1}\lsch P,R\rsch\,.
\end{equation}
(We shall still refer to it as Schouten bracket.) It now satisfies
\begin{equation*}
    [\la P,R]=(-1)^{\lat}\la [P,R]\,,
\end{equation*}
\begin{equation*}
    [P,R]=(-1)^{\Pt\Rt}[R,P]\,,
\end{equation*}
and 
\begin{equation*}
    [P,[R,Q]]=(-1)^{\Pt+1}[[P,R],Q]+ (-1)^{(\Pt+1)(\Rt+1)}[R,[P,Q]]\,,
\end{equation*}
The explicit coordinate formula will be
\begin{equation}\label{eq.newcanschexplic}
    [P,R]= (-1)^{\at(\Pt+1)}\der{P}{x^*_a}\der{R}{x^a} + (-1)^{\Pt+ \at(\Pt+1)}\der{P}{x^a}\der{R}{x^*_a}\,,
\end{equation}
in particular,
\begin{equation}\label{eq.new canschcoord}
    [x^*_a,x^b]=(-1)^{\at}\d_a^b=  [x^b,x^*_a]\,.
\end{equation}

A \emph{master (anti)Hamiltonian} for a $\Pinf$-structure on $M$ is an  even  formal function $P\in \fun(\Pi T^*M)$ (i.e. treated as a formal expansion around the zero section in $\Pi T^*M$) \,---\,we shall suppress the adjective usually,\,---\, $P=P(x,x^*)$,
\begin{equation}\label{eq.antimasterp}
    P=P_0+P_1+P_2+P_3+\ldots = P_0(x) + P^a(x)x^*_a +\frac{1}{2}P^{ab}x^*_bx^*_a +\frac{1}{3!}P^{abc}x^*_cx^*_bx^*_a +\ldots  
\end{equation}
satisfying the equation
\begin{equation}\label{eq.mastereqp}
    [P,P]=0\,.
\end{equation}
Brackets generated by $P$ are defined by the formula
\begin{equation}\label{eq.pinfder}
    \{f_1,\ldots,f_n\}_P:= [\ldots [P,f_1],\ldots,f_n]|_{M}\,.
\end{equation}
Here at the right-hand side $[-,-]$ denotes the canonical Schouten bracket re-defined as above. It is clear that the bracket given by~\eqref{eq.pinfder} has parity $n\mod 2$ and satisfies
\begin{equation*}
     \{\la f_1,\ldots,f_n\}_P=(-1)^{n\lat}\la \{f_1,\ldots,f_n\}_P\,,
\end{equation*}
so the opening bracket carries the parity. One can check that the brackets are antisymmetric in the $\Z$-graded sense. Finally, the generalized Jacobi identities are equivalent to the condition $[P,P]=0$ (by the results of~\cite{tv:higherder}).

In a similar way, a \emph{master  Hamiltonian} for an $\Sinf$-structure on $M$ is an  odd  formal function $H\in \fun(T^*M)$ , $H=H(x,p)$,
\begin{equation}\label{eq.masterp}
    H=H_0+H_1+H_2+H_3+\ldots = H_0(x) + H^a(x)p_a +\frac{1}{2}H^{ab}p_bp_a +\frac{1}{3!}H^{abc}p_cp_bp_a +\ldots
\end{equation}
satisfying the equation
\begin{equation}\label{eq.mastereqh}
    (H,H)=0 
\end{equation}
(with the canonical Poisson bracket). 
Brackets generated by $H$ are defined by the formula
\begin{equation}\label{eq.sinfder}
    \{f_1,\ldots,f_n\}_H:= (\ldots (H,f_1),\ldots,f_n)|_{M}\,.
\end{equation}
Here at the right-hand side $(-,-)$ denotes the canonical Poisson bracket on $T^*M$. Since $H$ is odd and the canonical Poisson bracket is even, all brackets~\eqref{eq.sinfder} are odd. Parity is carried by the opening bracket. 
One   checks that the brackets are symmetric in the $\Z$-graded sense. The generalized Jacobi identities are equivalent to the condition $(H,H)=0$.

Suppose now we have an $\Linf$-algebra $L$ or an $\Linf$-algebroid $E\to M$. With the notions of $\Pinf$- and $\Sinf$-structures on manifolds at hand, we are able to supplement the picture given above by providing analogs of the Lie--Poisson and Lie--Schouten bracket for Lie (super)algebras or Lie algebroids. In the $\Linf$-case it is convenient to start from a homological vector field. For concreteness, consider an $\Linf$-algebra $L$. To it corresponds a homological vector field $Q\in \Vect(\Pi L)$. We lift it to $T^*(\Pi L)$ and $\Pi T^*L$ as `linear Hamiltonians' $H_Q$ and $\mul_Q$ (see the formulas in Examples~\ref{ex.canpoi} and~\ref{ex.cansch}). This preserves the brackets, so we obtain Hamiltonians satisfying $(H_Q,H_Q)=0$ and $[\mul_Q,\mul_Q]=0$ (note that $H_Q$ is odd and $\mul_Q$ is even). After that we apply the Mackenzie--Xu canonical diffeomorphism $T^*(\Pi L)\cong T^*(\Pi L^*)$ and its odd analog $\Pi T^*(\Pi L)\cong \Pi T^*(L^*)$ (see~\cite{tv:graded} and \cite[\S 2]{tv:microformal}). This gives an odd Hamiltonian $H$ on $\Pi L^*$ satisfying $(H,H)=0$ and an even `antiHamiltonian' $P$ on $L^*$ satisfying $[P,P]=0$\,. By definition, we take them as master Hamiltonians for a sequence of odd \emph{homotopy Lie--Schouten brackets} on $\Pi L^*$ (which form an $\Sinf$-algebra) and a sequence of \emph{homotopy  Lie--Poisson brackets} of alternating parities on $L^*$ (which form a $\Pinf$-algebra).

Note that in the same way as classical Lie--Poisson and Lie--Schouten bracket are distinguished by being ``linear brackets'' (which is conveniently formulated as having weight $-1$), the homotopy brackets obtained are also distinguished by their weights. One can check that (for both cases), a bracket of arity $n$ has weight $1-n$. (In the classical case of $n=2$, $1-2=-1$.) For example, if $Q$ on $\Pi L$ is given by~\eqref{eq.qexpansion}, then (up to signs, that can be found)
\begin{equation*}
    \{\h_{i_1}, \ldots,\h_{i_n}\}= \pm Q^k_{i_1\ldots i_n} \h_k
\end{equation*}
for the homotopy Lie--Schouten bracket on $\Pi L^*$\,. 

Same works for  $\Linf$-algebroids.

What about morphisms?  The case of $\Linf$-algebroids is, roughly, a combination of the cases of the usual Lie algebroids and $\Linf$-algebras. So let us recall the notion of morphisms of $\Linf$-algebras, the so called   \emph{$\Linf$-morphisms} (Lada--Stasheff~\cite{lada:stasheff}). If two $\Linf$-algebras $L$ and $K$ are given, an $\Linf$-morphism $L\specto K$ not a map from $L$ to $K$ in any sense (that is why we use a special arrow). An algebraic definition given in~\cite{lada:stasheff} is that  an \emph{$\Linf$-morphism} $L\specto K$ is a sequence of linear maps
\begin{equation*}
    \La^n L\to K
\end{equation*}
(equivalently, skew-symmetric multilinear maps $L\times \ldots \times L\to K$) of alternating parities (so that the linear map $L\to K$ in this sequence is even) and satisfying an infinite sequence of identities  for inserting brackets to multilinear maps and other way round. We skip their exact form. A remarkable fact is that all these identities can be packed into one, namely, that a formal map $\Pi L\to \Pi K$ obtained by assembling together the maps $S^n(\Pi L)\to \Pi K$ (recall $S^n(\Pi L)\cong \Pi^n(\La^n L)$) is a $Q$-map for the corresponding homological vector fields. 

We  summarize this in a \hypertarget{tab.manisec}{table} (partly incomplete).

\begin{center} 
{\renewcommand{\arraystretch}{1.3}
\begin{tabular}{|c|c|} \hline
  % after \\ : \hline or \cline{col1-col2} \cline{col3-col4} ...
  %\rule{0pt}{18pt} An even Poisson manifold $M$ & An odd Poisson or Schouten manifold $M$  \\ \hline
  %\multicolumn{2}{|c|}{\rule{0pt}{21pt}$\displaystyle \{f,g\}_{P,H}= (-1)^{\at(\ft+1)}\der{f}{x^a}\{x^a,x^b\}_{P,H}\der{g}{x^b}$}\\
  \multicolumn{2}{|c|}{\rule{0pt}{15pt} Manifestations of $\Li$-algebras }\\
  \hline
  \rule{0pt}{15pt} Objects: $L$ & Morphisms: $L\specto K$  \\ \hline
  $L$ with antisymmetric brackets   & $\L^nL\to K$ ($n=0,1,2,\ldots$ )   \\
$V=\Pi L$ with symmetric odd brackets  & $S^nV\to W$ ($n=0,1,2,\ldots$ ) \\
$V$ as formal $Q$-manifold &   $Q$-map $V\to W$\\
$\Pi L^*=V^*$ as $\Sinf$-manifold & ??\\
$L^*=\Pi V^*$ as $\Pinf$-manifold & ??\\
\rule{0pt}{-20pt} &  \rule{0pt}{-20pt}\\
   \hline
\end{tabular}
}
\end{center}

We stress the following: since for an $\Linf$-morphism $L\specto K$ there is \emph{no} linear (or any other) map from $L$ to $K$ and the map $\Pi L\to \Pi K$ is non-linear, it is not possible, in a conventional way, as we do it for homomorphisms of ordinary Lie algebras, to obtain   maps in the opposite direction, from $K^*$ to  $L^*$  or from $\Pi K^*$ to  $\Pi L^*$ (which in the ordinary case are also  Poisson maps, mapping the bracket of functions  on $(\Pi)L^*$ to the bracket of functions on $(\Pi)K^*$). We shall solve this riddle in the next section, together with the problem concerning higher Koszul brackets.

%%%%%%%%%%%%%%%%%%%%%%%%%%%%%%%%%%%%%%%%%%%%%%%%%%%%%%%%%%%%%%%%%%%%%%%%%%%%%%%%%%%%%%%%%%%%%%%%%%%%%%%%%%%%%%%%%%%%%
%%%%%%%%%%%%%%%%%%%%%%%%%%%%%%%%%%%%%%%%%%%%%%%%%%%%%%%%%%%%%%%%%%%%%%%%%%%%%%%%%%%%%%%%%%%%%%%%%%%%%%%%%%%%%%%%%%%%%
%%%%%%%%%%%%%%%%%%%%%%%%%%%%%%%%%%%%%%%%%%%%%%%%%%%%%%%%%%%%%%%%%%%%%%%%%%%%%%%%%%%%%%%%%%%%%%%%%%%%%%%%%%%%%%%%%%%%%
\section{Higher Koszul brackets. Problem and   solution}\label{sec.koszul}

Here we define higher Koszul brackets, %for a homotopy Poisson manifold, 
state the problem and describe the solution.

\subsection{Classical Koszul bracket} 

Let $M$ be a supermanifold. Consider first the case when $M$ is endowed with a usual Poisson structure (before going to the homotopy case). Recall the construction of Koszul bracket induced in the algebra of forms $\O(M)$. Though the original Koszul's definition was for ordinary manifolds, it is easy to   define the bracket in the super case. If we denote a given Poisson bracket on functions as $\{f,g\}_P$, where $f,g\in \fun(M)$, then the Koszul bracket, denoted is $[-,-]_P$, is  defined on functions and differentials of function by the formulas:
\begin{equation}\label{eq.initkoszulbin}
    [f,g]_P:=0\,, \quad [f,dg]_P=(-1)^{\ft}\{f,g\}_P\,, \quad [df,dg]_P=  -(-1)^{\ft}d\{f,g\}_P\,,
\end{equation}
and is extended to the whole $\O(M)$ by the Leibniz rule. More precisely, it is easy to check that the odd brackets~\eqref{eq.initkoszulbin} are bilinear and  symmetric with respect to reversed parity.
Then one can check that the Jacobi identity in the form~\eqref{eq.oliejac} is satisfied for combinations of $f$'s and $df$'s and then conclude that extension by the Leibniz rule makes $\O(M)$ an odd Poisson algebra.

An alternative approach is by introducing a master Hamiltonian for Koszul bracket. We only give a formula (found in~\cite{tv:higherpoisson}) because in the next subsection we consider more general case. In coordinates $(x^a,dx^a,p_a,\pi_a)$ on $T^*(\Pi TM)$ we have
\begin{equation}\label{eq.masterforbinkosz}
    K = -P^{ab}\pi_bp_a +\frac{1}{2}\,dP^{ab}\pi_b\pi_a\,,
\end{equation}
if the Poisson bracket on $M$ is given by $P=\frac{1}{2}\, P^{ab}x^*_bx^*_a$\,.

\begin{proposition} The odd Hamiltonian $K$ is well-defined. It satisfies $(K,K)=0$ and the odd Poisson bracket defined in $\O(M)$ by 
\begin{equation}\label{eq.koszulbinder}
    [\o,\s]=((K,\o),\s)
\end{equation}
coincides with the bracket $[-,-]_P$ given by~\eqref{eq.initkoszulbin}. 
\end{proposition}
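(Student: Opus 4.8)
The plan is to recognise $K$ as the canonical Poisson bracket of two \emph{manifestly invariant} functions on $T^*(\Pi TM)$ and to read off all three assertions from this. Introduce the cotangent lift $H_d:=dx^ap_a$ of the de Rham field $d=dx^a\der{}{x^a}$ (the linear Hamiltonian met in the Remark above) and $P_\pi:=\tfrac12 P^{ab}\pi_b\pi_a$, the Poisson tensor written with the momenta $\pi_a$ in place of the antimomenta $x^*_a$. First I would verify the identity
\[
    K=(H_d,P_\pi)\,,
\]
a one-line application of the Leibniz rule for $(-,-)$: pairing $p_a$ in $H_d$ against the $x$-dependence of $P^{ab}(x)$ produces $\tfrac12 dP^{ab}\pi_b\pi_a$, while pairing $dx^a$ against the two $\pi$'s produces $-P^{ab}\pi_bp_a$. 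Well-definedness of $K$ is then automatic: $H_d$ is invariant because $d$ is globally defined (the second-derivative terms in the transformation law of $p_a$ cancel, $dx^adx^b$ being antisymmetric while $\partial_a\partial_bx^{a'}$ is symmetric), and $P_\pi$ is invariant because $\pi_a$ transforms covariantly, as $\der{}{(dx^a)}$, while $P^{ab}$ is a contravariant tensor; hence so is $K$, for any bivector $P$.

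For the master equation I would use $(H_d,H_d)=0$ (equivalent to $d^2=0$) together with $(P_\pi,P_\pi)=0$, the latter holding automatically since $P_\pi$ depends only on $(x,\pi)$ and these coordinates are in involution ($x$ is conjugate to $p$, and $\pi$ to $dx$). Expanding $(K,K)$ by the graded Jacobi identity and using the consequence $(K,H_d)=0$ of $(H_d,H_d)=0$, one is left with
\[
    (K,K)=-\bigl(H_d,(K,P_\pi)\bigr)\,,
\]
and a short computation identifies $(K,P_\pi)=\bigl((H_d,P_\pi),P_\pi\bigr)$ with the lift $\tfrac{1}{3!}\lsch P,P\rsch^{abc}\pi_c\pi_b\pi_a$ of the Schouten square under $x^*_a\mapsto\pi_a$. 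Since $P$ is a genuine Poisson structure, $\lsch P,P\rsch=0$, so $(K,P_\pi)=0$ and therefore $(K,K)=0$. (The same computation yields the converse implication.)

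Finally, once $(K,K)=0$ I would deduce that $[\omega,\sigma]:=((K,\omega),\sigma)$ is an odd Poisson bracket on $\O(M)=\fun(\Pi TM)$: forms depend only on $x$ and $dx$ and are therefore mutually in involution, which makes the derived bracket a biderivation of the product; and the odd Jacobi identity holds because $\Delta:=(K,-)$ satisfies $\Delta^2=\tfrac12((K,K),-)=0$. To identify this bracket with $[-,-]_P$ it then suffices, both being biderivations, to compare them on the algebra generators $f$ and $df$ of $\O(M)$. Evaluating $((K,\omega),\sigma)$ on $\omega,\sigma\in\{f,dg\}$ reproduces~\eqref{eq.initkoszulbin}: only $-P^{ab}\pi_bp_a$ contributes to $(K,f)$, giving $[f,dg]=\pm\{f,g\}_P$; one further pairing combines the $-P^{ab}\pi_bp_a$ and $\tfrac12 dP^{ab}\pi_b\pi_a$ terms into $[df,dg]=\pm d\{f,g\}_P$; and $[f,g]=0$ since $(K,f)$ carries an unpaired $\pi$. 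Matching signs against the conventions of Example~\ref{ex.canpoi} finishes the identification.

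I expect the main obstacle to be the sign bookkeeping: the parities of the odd fibre coordinates $dx^a,\pi_a$ and the exact signs of $(-,-)$ must be propagated consistently through the factorisation $K=(H_d,P_\pi)$, the Jacobi reduction, and the evaluation on generators. The one genuinely substantive step \,---\, and the only place the Poisson hypothesis enters \,---\, is the identification of $(K,P_\pi)$ with the lift of $\lsch P,P\rsch$; everything else is formal once $K=(H_d,P_\pi)$ is established.
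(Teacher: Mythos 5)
Your proposal is correct in substance, but it takes a different route from the paper, which in fact offers no proof at all for this proposition: the formula for $K$ is quoted from the earlier work \cite{tv:higherpoisson}, and the intended justification is the general ``manifestations'' machinery of Section~\ref{sec.recall} --- namely, $K$ is by construction the Mackenzie--Xu transform ($T^*(\Pi T^*M)\cong T^*(\Pi TM)$) of the linear Hamiltonian $H_{d_P}$ of the Lichnerowicz differential $d_P=\lsch P,-\rsch\in\Vect(\Pi T^*M)$, so invariance, $(K,K)=H_{[d_P,d_P]}\circ(\text{MX})^{-1}=0$, and the identification of the derived brackets with the Lie--Schouten (i.e.\ Koszul) brackets of the algebroid $T^*M$ all come for free from that framework. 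Your factorization $K=(H_d,P_\pi)$ is precisely the Mackenzie--Xu mirror of this: under the identification $T^*(\Pi TM)\cong T^*(\Pi T^*M)$ one has $H_d\mapsto \Sch$ and $P_\pi\mapsto P$ (this is the content of the paper's Remark that $(-1)^{\at}\pi^a$ transforms as $dx^a$), and your key lemma $\bigl((H_d,P_\pi),P_\pi\bigr)=\pm\lsch P,P\rsch|_{x^*\to\pi}$ is exactly equation~\eqref{eq.hamforsch2}, $((P,\Sch),R)=\lsch P,R\rsch$, with $R=P$; likewise $K=(H_d,P_\pi)$ corresponds to $\pm(\Sch,P)=\mp H_{d_P}$. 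What your route buys is self-containedness: everything happens on $T^*(\Pi TM)$ in explicit coordinates, with no appeal to Mackenzie--Xu or to the algebroid formalism, and the equivalence $(K,K)=0\Leftrightarrow\lsch P,P\rsch=0$ drops out of the same computation; what the paper's route buys is that the proposition is an instance of a general theorem (lift of a homological field plus MX), which also explains conceptually where $K$ comes from. Two small points you should make explicit to be fully rigorous: (i) since $K$ is fiberwise quadratic in $(p,\pi)$, the expression $((K,\o),\s)$ automatically has degree zero in the momenta and so genuinely lands in $\fun(\Pi TM)$ with no restriction to the zero section needed --- this is what legitimizes formula~\eqref{eq.koszulbinder} as written; (ii) the statement that $\D=(K,-)$ with $\D^2=0$ yields an odd Poisson bracket on the abelian (Lagrangian) subalgebra of functions of $(x,dx)$ is the derived-brackets lemma of \cite{yvette:derived}, which deserves a citation rather than a one-line assertion. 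The sign bookkeeping you defer is the only remaining gap, and the paper itself is equally cavalier about signs, so this does not undermine the argument.
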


Notice now that this classical Koszul bracket can be seen as a Lie algebroid structure in $T^*M$. Indeed, it is an odd Poisson bracket on the vector bundle $\Pi TM$  of weight $-1$, so by general theory explained in the previous section, corresponds to a Lie algebroid structure in $T^*M$. The bracket of $1$-forms is  a bracket of sections (of the parity-reversed bundle $\Pi T^*M$) and the anchor $a_P$ is given by the bracket
\begin{equation*}
    a_P(fdg)(h)= [fdg,h]_P=f\{g,h\}_P\,.
\end{equation*}
We immediately see that as a map $a_P\co \Pi T^*M\to \Pi TM$ it is given by the formula
\begin{equation}\label{eq.anchkoszbin}
    a_P^*(dx^a)=P^{ab}x^*_b\,,
\end{equation}
so the pullback by $a_P$  is exactly the `raising indices' map $P^{\#}\co \O(M)\to \Mult(M)$, $P^{\#}=a_P^*$. The anchor $a_P\co \Pi T^*M\to \Pi TM$ is in particular a morphism of vector bundles over a fixed base. Consider its dual or adjoint, which should be a morphism of the dual bundles in the opposite direction. But $(\Pi TM)^*=\Pi T^*M$ and $(\Pi T^*M)^*=\Pi TM$, so the dual $a_P^*$ (no confusion with pullback!) is a morphism of the same bundles: $a_P^*\co \Pi T^*M\to \Pi TM$. It we  calculate it, we obtain that it is given by `raising indices' with the help of $T^{ab}=P^{ba}(-1)^{(\at+1)(\bt+1)}$. But due to the symmetry of the Poisson tensor $P^{ab}$ we have $T^{ab}=P^{ab}$, hence the dual map $a_P^*$ equals $a_P$.

Let us summarize. The structure of a Lie algebroid induced into $T^*M$ by a Poisson structure on $M$ has   equivalent manifestations: as the $Q$-structure, it is the Lichnerowicz differential regarded as a vector field $d_P\in \Vect(\Pi T^*M)$; as the Lie--Schouten bracket, it is the Koszul bracket on $\Pi TM$. (There is also another manifestation, the Lie--Poisson bracket on $TM$, which we do not consider now.) By general theory, for any Lie algebroid the anchor is a Lie algebroid morphism. So in our case, we have a Lie algebroid morphism $a_P\co T^*M\specto TM$. Its two manifestations are  the vector bundle map  $a_P\co \Pi T^*M\to \Pi TM$  and its dual $a_P^*$, which  in our case coincide. Hence we have for free the classical statement:
\begin{theorem} For a Poisson (super)manifold $M$ with a Poisson tensor $P$, the operation of  raising indices by $P$, $P^{\#}\co \O(M)\to \Mult(M)$, maps the de Rham differential to the Lichnerowicz differential and the Koszul bracket to the Schouten bracket. \qed
\end{theorem}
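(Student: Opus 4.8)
The plan is to obtain both assertions simultaneously, and with no further computation, from the single fact that the anchor of a Lie algebroid is always a Lie algebroid morphism, together with the Proposition describing such morphisms in their various manifestations.

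First I would invoke the classical fact\,---\,valid verbatim in the super setting\,---\,that for any Lie algebroid $E\to M$ the anchor $a\co E\to TM$ is a Lie algebroid morphism over the fixed base $M$, where $TM$ carries its canonical Lie algebroid structure. Applied to $E=T^*M$ equipped with the Lie algebroid structure induced by $P$, whose $Q$-manifestation is the Lichnerowicz field $d_P\in\Vect(\Pi T^*M)$ and whose Lie--Schouten manifestation is the Koszul bracket on $\Pi TM$, this produces the Lie algebroid morphism $a_P\co T^*M\specto TM$ already identified above.

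Next I would apply the Proposition on equivalent manifestations of Lie algebroid morphisms. For a morphism $E_1\to E_2$ it supplies a fiberwise linear $Q$-map $\Pi E_1\to\Pi E_2$ and a fiberwise linear Lie--Schouten Poisson map $\Pi E_2^*\to\Pi E_1^*$. With $E_1=T^*M$ and $E_2=TM$, using $(\Pi TM)^*=\Pi T^*M$ and $(\Pi T^*M)^*=\Pi TM$, both manifestations become maps $\Pi T^*M\to\Pi TM$. The $Q$-map is the parity-reversed anchor $a_P$ itself, while the Poisson map is its adjoint; since the Poisson tensor is (super)symmetric, the adjoint coincides with $a_P$ (as computed above), so both manifestations are realized by the same underlying map, whose pullback on functions is exactly the raising-indices map $P^{\#}=a_P^*\co\O(M)\to\Mult(M)$.

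Finally I would read off the two statements. The $Q$-map condition $\f^*\circ Q_2=Q_1\circ\f^*$, with $Q_2=d$ the de Rham field on $\Pi TM$ and $Q_1=d_P$ the Lichnerowicz field on $\Pi T^*M$, becomes $P^{\#}\circ d=d_P\circ P^{\#}$ on functions; this is the commutativity of the diagram, i.e. assertion~(a). The Poisson-map condition for the Lie--Schouten brackets says that the pullback $P^{\#}$ carries the Koszul bracket on $\O(M)$ to the canonical Schouten bracket on $\Mult(M)$, i.e. assertion~(b). The only point requiring care is the dualization bookkeeping: the two manifestations of one morphism live a priori on different bundles, and it is precisely the self-adjointness of $a_P$, forced by the symmetry $P^{ab}=(-1)^{(\tilde a+1)(\tilde b+1)}P^{ba}$, that lets both be realized by the single map $P^{\#}$. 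Beyond identifying $a_P$ with its adjoint, no new calculation is needed.
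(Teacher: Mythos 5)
Your proposal is correct and follows essentially the same route as the paper: the text preceding the theorem establishes exactly this chain\,---\,the Koszul bracket as a Lie algebroid structure on $T^*M$ with $Q$-manifestation $d_P$ and Lie--Schouten manifestation the Koszul bracket, the anchor $a_P\co T^*M\specto TM$ as a Lie algebroid morphism, and the coincidence of $a_P$ with its adjoint forced by the symmetry $P^{ab}=(-1)^{(\tilde a+1)(\tilde b+1)}P^{ba}$\,---\,which is why the theorem is stated with \qed{} as following ``for free.'' Your reading of the two manifestations (the $Q$-map condition giving commutativity with the differentials, the Lie--Schouten Poisson-map condition giving bracket preservation, both realized by the single map $P^{\#}=a_P^*$) matches the paper's argument point for point.
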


\begin{remark}
Let us stress that   this formulation combines two manifestations (on dual bundles) for each of    the two Lie algebroid structures: the canonical one in   $TM$ and the one in   $T^*M$ induced by a Poisson bracket on $M$, and of the anchor  $T^*M\specto TM$ linking them.  This explains why the canonical structures and the structures depending on $P$ meet ``cross-wise'':
\begin{equation*}
    \text{(de Rham, Koszul)} \to \text{(Lichnerowicz, Schouten)}\,.
\end{equation*}
The ``accidental equality'' of the vector bundle morphism $a_P$ and its dual arises because of the (anti)symmetry $P^{ab}=(-1)^{(\at+1)(\bt+1)} P^{ba}$.
\end{remark}

Now we can turn to the homotopy case.

\subsection{Definition of higher Koszul brackets} 

Let a supermanifold has a $\Pinf$-structure (homotopy Poisson). It is specified by a `master antiHamiltonian' (or multivector field) $P\in \fun(\Pi T^*M)$, which is odd and satisfies $\lsch P,P,\rsch=0$ (equivalently, $[P,P]=0$, in alternative convention for the Schouten bracket)\,. It defines the homotopy brackets of functions by~\eqref{eq.pinfder}.

The \emph{higher Koszul brackets} are defined on forms on $M$ by the formulas for functions and differentials of functions~\cite{tv:higherpoisson}
\begin{align}
    [f]_P&=\{f\}_P \quad \text{and} \quad [f_1,\ldots,f_k]=0 \ \text{for $k\geq 2$}\,,  \label{koszul.eq.higherfunc}\\
    [f_1,df_2,\ldots,df_n]_P&=(-1)^{\e}\,\{f_1,f_2,\ldots,f_n\}_P\,,  \label{koszul.eq.higherfuncdif}\\
    [df_1,\ldots,df_n]_P&=(-1)^{\e+1}\,d\{f_1,\ldots,f_n\}_P\,,  \label{koszul.eq.higherdifdif}
\end{align}
where $\e=(n-1)\ft_1+(n-2)\ft_2+\ldots+\ft_{n-1}+n$. There are no other non-zero brackets between $f$'s and $df$'s apart of   obtained from the above by symmetry. To the whole algebra $\O(M)$ the brackets are extended by the Leibniz rule. The above formulas are very clearly see as specifying an $\Linf$-algebroids structure in $T^*M$, namely, the brackets of sections and higher anchors.

It is very easy to recognize the whole general framework for higher Koszul brackets. Since there is a homotopy Poisson structure on $M$, specified by a master Hamiltonian $P$, there arises its ``Hamiltonian vector field'' that lives on $\Pi T^*M$. By definition, it is the \emph{Lichnerowicz differential} for a $\Pinf$-structure:
\begin{equation}\label{eq.lichnerforpinf}
    d_P:=[P, -]\,.
\end{equation}
This is exactly the $Q$-  manifestation of an $\Linf$-algebroid structure that arises in $T^*M$. By the formula for the canonical Schouten bracket,
\begin{equation}\label{eq.anchoronpitstarm}
    d_P=  (-1)^{\at}\der{P}{x^*_a}\der{ }{x^a} + (-1)^{\at}\der{P}{x^a}\der{ }{x^*_a}\,.
\end{equation}

From it we immediately obtain the anchor (as a map $\Pi T^*M\to \Pi TM$). It is given by the first term in~\eqref{eq.anchoronpitstarm}. We have a fiberwise  map  
$a_P\co \Pi T^*M\to \Pi TM$ over $M$, $(x^a,x^*_a)\mapsto (x^a, dx^a=(-1)^{\at}\lder{P}{x^*_a})$ (this differs by the negative sign from~\cite{tv:higherpoisson}, which plays no role).  For quadratic $P$ (the classical case) it immediately gives `raising indices' by $P^{ab}$. Since the anchor for any $\Linf$-algebroid is an $\Linf$-morphism (a generalization of the classical fact for Lie algebroids), we without effort get the theorem
\begin{theorem} The diagram
\begin{equation}
    \begin{CD} \Mult(M)@>{d_P}>> \Mult (M)\\
                @A{a_P^*}AA         @AAa_P^*A\\
                \O(M)@>{d}>> \O(M) \,,
    \end{CD}
\end{equation}
is    commutative. Here the vertical arrows are pullbacks by the   map $(x^a,x^*_a)\mapsto (x^a, dx^a=(-1)^{\at}\lder{P}{x^*_a})$. \qed
\end{theorem}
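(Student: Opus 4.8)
The plan is to read the asserted commutativity as the statement that $d_P$ and $d$ are $\f$-related, i.e. that the anchor $a_P\co \Pi T^*M\to \Pi TM$ is a $Q$-morphism from $(\Pi T^*M, d_P)$ to $(\Pi TM, d)$, where $d=dx^a\lder{}{x^a}$ is the de Rham field on $\Pi TM$. By the characterization of related vector fields~\eqref{eq.relatedvfs}, namely $\f^*\circ X_2 = X_1\circ \f^*$, relatedness of $d$ and $d_P$ is precisely $a_P^*\circ d = d_P\circ a_P^*$, which is the commutativity of the square. Conceptually this is the homotopy analog of ``the anchor of a Lie algebroid is a morphism'': since $d_P^2=0$ by the master equation~\eqref{eq.mastereqp}, the field $d_P$ makes $T^*M$ an $\Linf$-algebroid and its anchor is an $\Linf$-morphism whose $Q$-manifestation is the $Q$-map $a_P$. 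I would nonetheless verify the relatedness by hand, since this is what exposes where~\eqref{eq.mastereqp} enters.

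Next I would test $a_P^*\circ d = d_P\circ a_P^*$ on the two kinds of generators of $\fun(\Pi TM)$, the $x^a$ and the $dx^a$. On base coordinates, $a_P^*(d\,x^a)=a_P^*(dx^a)=(-1)^{\at}\der{P}{x^*_a}$ while $d_P(a_P^*x^a)=d_P(x^a)=(-1)^{\at}\der{P}{x^*_a}$ by~\eqref{eq.anchoronpitstarm}; so here the identity holds tautologically, being nothing but the defining formula for $a_P$. The whole content sits in the fibre generators: since $d(dx^a)=0$, we must show
\begin{equation*}
    d_P\bigl(a_P^*(dx^a)\bigr)=\Bigl[P,(-1)^{\at}\der{P}{x^*_a}\Bigr]=0\,,
\end{equation*}
that is, $\bigl[P,\der{P}{x^*_a}\bigr]=0$.

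The decisive step is to deduce $\bigl[P,\der{P}{x^*_a}\bigr]=0$ from $[P,P]=0$. From the explicit formula~\eqref{eq.newcanschexplic} one reads off $[x^a,R]=(-1)^{\at}\der{R}{x^*_a}$, so that $\der{P}{x^*_a}=(-1)^{\at}[x^a,P]$ and hence $\bigl[P,\der{P}{x^*_a}\bigr]=(-1)^{\at}\bigl[P,[x^a,P]\bigr]$. Expanding $\bigl[P,[x^a,P]\bigr]$ by the graded Jacobi identity for $[-,-]$ produces one term proportional to $\bigl[x^a,[P,P]\bigr]$, which vanishes by~\eqref{eq.mastereqp}, and one term $-\bigl[[P,x^a],P\bigr]$; rewriting the latter with the graded symmetry of $[-,-]$ (which is harmless since $P$ is even) returns $-\bigl[P,[x^a,P]\bigr]$, so that $\bigl[P,[x^a,P]\bigr]=-\bigl[P,[x^a,P]\bigr]$ and therefore vanishes. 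With both generator types checked, the identity propagates to all of $\O(M)$ because $a_P^*$ is an algebra homomorphism and $d$, $d_P$ are derivations of the respective parities.

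The main obstacle is exactly the identity $\bigl[P,\der{P}{x^*_a}\bigr]=0$, which is the single place where the homotopy Poisson condition~\eqref{eq.mastereqp} is invoked and where all the Koszul sign bookkeeping is concentrated; the base-coordinate check and the extension by the Leibniz rule are purely formal. It is worth noting that this hands-on computation and the abstract ``anchor is an $\Linf$-morphism'' argument carry the very same information, since $\bigl[P,\der{P}{x^*_a}\bigr]=0$ is the coordinate shadow of $d_P^2=0$, itself equivalent to $[P,P]=0$.
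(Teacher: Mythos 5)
Your proof is correct, but it carries out explicitly the work that the paper deliberately outsources to general theory. The paper's entire argument is the sentence preceding the theorem (hence the bare \qed): since $d_P=[P,-]$ is the $Q$-manifestation of the $\Linf$-algebroid structure induced in $T^*M$, and since the anchor of \emph{any} $\Linf$-algebroid is an $\Linf$-morphism (the cited generalization of the classical Lie-algebroid fact), the map $a_P\co \Pi T^*M\to \Pi TM$ is automatically a $Q$-map intertwining $d_P$ and $d$, and commutativity of the diagram follows ``without effort''. You start from the same conceptual frame (commutativity $=$ $\f$-relatedness of $d$ and $d_P$, per~\eqref{eq.relatedvfs}), but then you actually verify relatedness on the coordinate generators of $\fun(\Pi TM)$: the check on $x^a$ is, as you say, tautological given~\eqref{eq.anchoronpitstarm}, and the check on $dx^a$ reduces to $\bigl[P,\lder{P}{x^*_a}\bigr]=0$, which you correctly obtain from $[x^a,R]=(-1)^{\at}\lder{R}{x^*_a}$, the graded Jacobi identity, the evenness of $P$, and the master equation $[P,P]=0$; your sign bookkeeping is consistent with~\eqref{eq.newcanschexplic}. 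What your route buys is self-containedness: it exposes exactly where $[P,P]=0$ enters and in effect proves, for this particular anchor, the general statement the paper only cites. What the paper's route buys is brevity and conceptual placement: the theorem appears as one instance of the ``manifestations'' philosophy of Section~\ref{sec.recall}, and the same abstract argument applies uniformly to all $\Linf$-algebroids rather than just to $T^*M$. Your closing remark that the two arguments carry the same information is accurate: the identity $[P,[x^a,P]]=0$ is precisely the coordinate shadow of $d_P^2=0$, which is what the anchor-is-a-morphism statement encodes.
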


Now we see the \textbf{problem}. We already stated it in the Introduction as the question how to relate the sequence of higher Koszul brackets in the algebra of forms (which make $\O(M)$ a $\Sinf$-algebra) with the canonical Schouten bracket of multivector fields. In the classical situation, when there was just one binary Koszul bracket, there was simply a homomorphism of Lie superalgebras mapping it to the Schouten bracket. As we established, it was the pullback of functions with respect to the adjoint for the linear map $a_P\co \Pi T^*M \to \Pi TM$. By the property of the Poisson tensor, the adjoint to the anchor coincided with the anchor, so effectively the homomorphism between Koszul bracket and the Schouten bracket was the same map that intertwined $d$ and $d_P$. Now this would not work for the simple reason that $a_P$ is a non-linear map and it is clear how to take its adjoint to begin with, and what it could be. 

These are exactly the questions we need to answer to be able to complete the table at the end of the previous section.

We can give now a sufficiently explicit answer to the question about higher Koszul brackets and then explain why it works.

\begin{theorem} 
\label{thm.solu}
The following non-linear transformation of \underline{even} differential forms on $M$ to \underline{even} multivector fields is an $\Linf$-morphism from the $\O(M)$ regarded as an   $\Linf$-algebra with respect to the higher Koszul brackets to the odd Lie superalgebra $\Mult(M)$  with the canonical Poisson bracket: an even form $\o=\o(x,dx)$ is mapped to a multivector field $\s=\s(x,x^*)$ given by the formula
\begin{equation}\label{eq.solution}
    \s(x,x^*)= \o(x,dx) + \der{P}{x^*_a}\left(x, \der{\o}{dx}(x,dx)\right) x^*_a -dx^a\der{\o}{dx^a}(x,dx)\,,
\end{equation}
where the variables $dx^a$ are eliminated from right-hand side by substituting the solution of the non-linear equation
\begin{equation}\label{eq.solution2}
    dx^a= \dder{P}{x^*_a}{x^*_b}\left(x, \der{\o}{dx}(x,dx)\right)x^*_b\,.
\end{equation}
Here $P$ is the Poisson tensor. 
\end{theorem}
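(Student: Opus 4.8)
The plan is to realize the transformation \eqref{eq.solution}--\eqref{eq.solution2} as the pullback under a suitable \emph{thick morphism} and then to invoke the fact that pullbacks under thick morphisms of homotopy Schouten manifolds are $\Linf$-morphisms. Recall that a thick morphism $\Phi\co N_1\infto N_2$ with generating function $S=S(x,q)$ (with $x$ coordinates on $N_1$ and $q$ the conjugate momenta on the target $N_2$) has pullback
\[
\Phi^*[g]=\Bigl(g(y)+S(x,q)-y^iq_i\Bigr)\Big|_{\text{stat}}\,,\qquad q_i=\der{g}{y^i}\,,\ \ y^i=\der{S}{q_i}\,,
\]
the signs being those of~\cite{tv:microformal,tv:nonlinearpullback}. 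Taking $N_1=\Pi T^*M$ with coordinates $(x^a,x^*_a)$ and $N_2=\Pi TM$ with coordinates $(y^a,dy^a)$ and conjugate momenta $(q_a,\chi_a)$, I would set
\[
S(x,x^*;q,\chi)=x^aq_a+\der{P}{x^*_a}(x,\chi)\,x^*_a\,.
\]
A direct substitution then gives the stationarity conditions $y^a=\der{S}{q_a}=x^a$, $\chi_a=\der{\o}{dx^a}$, and $dx^a=\der{S}{\chi_a}=\dder{P}{x^*_a}{x^*_b}(x,\chi)\,x^*_b$, whence $\Phi^*[\o]$ equals precisely the right-hand side of \eqref{eq.solution} with the elimination \eqref{eq.solution2}. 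Since $S$ is even, $\Phi^*$ preserves parity, which is exactly why even forms are sent to even multivector fields; the critical-point equation \eqref{eq.solution2} is solvable as a formal power series on this even sector, which is the natural domain of the nonlinear-map description of an $\Sinf$-morphism.

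Next I would invoke the main theorem of microformal geometry~\cite{tv:microformal}: for supermanifolds $N_1,N_2$ carrying $\Sinf$-structures with odd master Hamiltonians $H_1\in\fun(T^*N_1)$ and $H_2\in\fun(T^*N_2)$, the pullback by a thick morphism $\Phi\co N_1\infto N_2$ is an $\Linf$-morphism $\fun(N_2)\specto\fun(N_1)$ of the associated $\Sinf$-algebras precisely when $S$ satisfies the Hamilton--Jacobi equation
\[
H_2\Bigl(\der{S}{q},q\Bigr)=H_1\Bigl(x,\der{S}{x}\Bigr)\,.
\]
In our situation $N_1=\Pi T^*M$ carries the canonical Schouten bracket, whose master Hamiltonian is $\Sch=(-1)^{\tilde a}\pi^ap_a$ (see~\eqref{eq.hamforsch}), while $N_2=\Pi TM$ carries the higher Koszul $\Sinf$-structure, whose master Hamiltonian $K$ is the one constructed in~\cite{tv:higherpoisson} (in the classical case $K=-P^{ab}\chi_bq_a+\frac{1}{2}\,(dP^{ab})\chi_b\chi_a$, i.e.~\eqref{eq.masterforbinkosz} with $(p_a,\pi_a)$ renamed $(q_a,\chi_a)$), satisfying $(K,K)=0$ iff $[P,P]=0$. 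It therefore remains only to verify the displayed Hamilton--Jacobi equation for the specific $S$ above; this is where the master equation $[P,P]=0$ must enter.

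The main obstacle is precisely this verification. Substituting $S$ one finds $\pi^a=\der{S}{x^*_a}=\der{P}{x^*_a}(x,\chi)$ and $p_a=\der{S}{x^a}=q_a+\dder{P}{x^a}{x^*_b}(x,\chi)\,x^*_b$ (up to Koszul signs), so that the right-hand side becomes
\[
\Sch=(-1)^{\tilde a}\der{P}{x^*_a}(x,\chi)\,q_a+(-1)^{\tilde a}\der{P}{x^*_a}(x,\chi)\,\dder{P}{x^a}{x^*_b}(x,\chi)\,x^*_b\,.
\]
The left-hand side $K\bigl(\der{S}{q},\der{S}{\chi};q,\chi\bigr)$ should produce the matching linear-in-$q$ anchor term $\der{P}{x^*_a}q_a$ coming from \eqref{eq.anchoronpitstarm}, together with a term quadratic in $x^*$ in which I expect the combination $\der{P}{x^*_a}\der{P}{x^a}$ and its first $x^*$-derivative to appear. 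The identity that forces the two sides to agree is exactly $[P,P]=0$ together with its $x^*$-derivative, both read off from the explicit Schouten bracket~\eqref{eq.newcanschexplic} and~\eqref{eq.mastereqp}.

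The delicate points in this last step are keeping track of the Koszul signs introduced by the graded Legendre transform, and confirming that the homotopy master Hamiltonian $K$ of~\cite{tv:higherpoisson} has the intrinsic form dictated by the full $P$ and not merely by its quadratic truncation, so that the quadratic term of $K$ matches the quadratic term of $\Sch$ above for all orders of $P$. Conceptually this is guaranteed: both $K$ and $\Sch$ are the two dual ``Lie--Schouten'' manifestations of the single $\Linf$-algebroid structure induced in $T^*M$ by $P$, whose anchor is the fiberwise map $a_P\co (x,x^*)\mapsto(x,(-1)^{\tilde a}\der{P}{x^*_a})$; the generating function $S$ is exactly the fiberwise Legendre transform realizing this (nonlinear) anchor as a thick morphism, so the solvability of the Hamilton--Jacobi equation is equivalent to $[P,P]=0$. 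Granting this, the cited theorem yields that $\Phi^*$, and hence the transformation \eqref{eq.solution}, is the desired $\Linf$-morphism.
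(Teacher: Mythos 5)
Your proposal follows the paper's own route: the paper likewise realizes \eqref{eq.solution}--\eqref{eq.solution2} as the pullback by the thick morphism $\Pi T^*M\tto \Pi TM$ whose generating function is the ``dual'' $S^*=x^ap_a+\der{P}{x^*_a}(x,\pi)x^*_a$ of the anchor's generating function, and then appeals to Theorem~\ref{thm.sinfthick} with the canonical Schouten Hamiltonian $\Sch$ on the source and the Koszul master Hamiltonian $K$ on the target. Your stationary-phase computation showing that this pullback reproduces \eqref{eq.solution} with the elimination \eqref{eq.solution2} is correct and is exactly what the paper means by ``follow directly by applying the construction of pullback.''

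The one place you genuinely diverge is how the Hamilton--Jacobi equation for $S^*$ is established, and that is also where your write-up has its only real weakness. The paper never verifies it by substitution: it notes that $a_P\co\Pi T^*M\to\Pi TM$ is a $Q$-map intertwining $d_P$ and $d$ (this is where $[P,P]=0$ enters, via $d_P^2=0$ and the general fact that the anchor of an $\Linf$-algebroid is an $\Linf$-morphism), so the linear Hamiltonians $H_{d_P}$ and $H_d$ are $a_P$-related; applying the Mackenzie--Xu transformation to both cotangent bundles converts this relatedness into precisely the statement that $K$ and $\Sch$ agree on the Lagrangian generated by $S^*$, i.e.\ the Hamilton--Jacobi equation, with no calculation. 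Your direct route is viable --- the $q$-linear terms match because the anchor term of $K$ is $\der{P}{x^*_a}$, and the terms quadratic in $x^*$ agree exactly by the $x^*$-derivative of the identity $\der{P}{x^*_a}\der{P}{x^a}=0$, as you anticipated (note that the general, non-quadratic form of $K$ you worry about is supplied in the paper's Appendix) --- but you stop short of carrying it out, and the ``conceptual guarantee'' you substitute for it is misstated: $K$ and $\Sch$ are \emph{not} two dual manifestations of the single $P$-induced $\Linf$-algebroid structure on $T^*M$. Rather, $\Sch$ is the Lie--Schouten manifestation of the \emph{canonical} Lie algebroid $TM$, $K$ is that of the $P$-induced structure on $T^*M$, and the two different structures meet ``crosswise'' through the anchor, which is an $\Linf$-morphism between them. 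Also, what must be proved is not ``solvability'' of the Hamilton--Jacobi equation but that the specific $S^*$ satisfies it. As written, then, your last step is a gap; it can be closed either by completing the substitution (with signs) along the lines you indicate, or by replacing your closing paragraph with the paper's adjoint/Mackenzie--Xu argument.
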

(Some signs may need checking.)

Here in the statement we used the manifestation of $\Linf$-morphisms in terms of $Q$-manifolds: in our case, the ``manifolds'' are infinite-dimensional supermanifolds whose ``points'' are even forms and even multivector fields on $M$, respectively. To obtain an algebraic description linking brackets as such, one has to develop the constructed non-linear map of forms to multivectors in a Taylor series.  

Of course, formulas such as~\eqref{eq.solution} and~\eqref{eq.solution2}  are not elucidating unless their meaning is clarified. Even if these formulas are   given, there is no hope to prove the theorem by brute force, not knowing where they  come from. We shall explain it now.

%%%%%%%%%%%%%%%%%%%%%%%%%%%%%%%%%%%%%%%%%%%%%%%%%%%%%%%%%%%%%%%%%%%%%%%%%%%%%%%%%%%%%%%%%%%%%%%%%%%%%%%%%%%%%%%%%%%%%
\subsection{Thick morphisms. Their applications to vector bundles and algebroids}\label{ssec.thick}
If we want to follow the logic that applies in the ordinary case, we would need the adjoint   map for the anchor map $a_P\co \Pi T^*M\to \Pi TM$ given by
\begin{equation*}
    dx^a=(-1)^{\at}\der{P}{x^*_a}\,;
\end{equation*}
unfortunately, such an adjoint cannot exist because the unless for a quadratic $P$, the map $a_P\co \Pi T^*M\to \Pi TM$ is not fiberwise linear.
Nevertheless, \textbf{adjoints of non-linear maps exist if we enlarge the class of morphisms}.  This was discovered in~\cite{tv:nonlinearpullback,tv:microformal}. The corresponding new morphisms of supermanifolds are called \emph{thick morphisms} or \emph{microformal morphisms}.

The key feature of thick morphisms is the construction of pull-back that (as in the usual case) maps functions in the opposite direction, but unlike the  usual case, the pullback by a thick morphism is a non-linear mapping of functions. Non-linear pullbacks are of course a radical departure from the familiar picture where pullbacks are algebra homomorphisms. It turns out that in spite of non-linearity, these pullbacks still possess some important algebraic properties. Among them: that the derivative of a pullback at each element of the space of functions is an algebra homomorphism. We will not retell here the theory of thick morphisms and non-linear pullbacks, referring the reader to ~\cite{tv:nonlinearpullback,tv:microformal}. We just quote the constructions and statements needed for our purpose.

First of all, the non-linearity of pullbacks forces to distinguish between even and odd functions (because of different commutativity properties). They make, respectively, infinite-dimensional supermanifolds denoted $\funn(M)$ and $\pfunn(M)$. (`Points' of $\funn(M)$ are bosonic fields, `points' of $\pfunn(M)$ are fermionic fields.) There arise two parallel theories: for even and odd functions. 

In bosonic theory, an     \emph{even thick morphism}\footnote{\,The adjective concerns the parity of functions on which the pullback by $\F$ is defined.} $M\F^*\co M_1\tto M_2$ is a formal canonical relation  from $T^*M_1$ to $T^*M_2$, i.e. a formal Lagrangian submanifold  in $T^*M_2\times (-T^*M_1)$ with respect to the symplectic form $\o_2-\o_1$, which is specified by an even generating function $S=S(x,q)$, where $x^a$ are coordinates on the source, $p_a$ are the conjugate momenta, $y^i$ are coordinates of the target and $q_i$ are the conjugate momenta.  The crucial construction of $\F^*\co \funn(M_2)\to \funn(M_1)$ is given by the following formula:

\begin{equation}\label{eq.pullbackev}
    \boxed{ \Phi^*[g] (x)= g(y) + S(x,q) - y^iq_i\,, \vphantom{\der{S}{q_i}}}
\end{equation}
for $g\in \funn(M_2)$, where $q_i$ and $y^i$ are determined from the equations
\begin{equation}
   { q_i=\der{g}{y^i}\,(y)\,, \quad y^i=(-1)^{\itt}\,\der{S}{q_i}(x,q)  }
\end{equation}
(which gives $y^i=(-1)^{\itt}\der{S}{q_i}(x,\der{g}{y}(y))$ solvable by iterations).

A generating function $S$ is considered as a formal expansion near the zero section:
\begin{equation*}
    S(x,q)=S_0(x)+ S^i(x)q_i + \frac{1}{2}\,S^{ij}(x)q_jq_i + \frac{1}{3!}\,S^{ijk}(x)q_kq_jq_i +\ldots
\end{equation*}
Ordinary maps $\f\co M_1\to M_2$ correspond to $S=\f^(x)q_i$ (linear in the target momenta) and for them the pullback is the usual pullback.

A parallel theory uses anticotangent bundles $\Pi T^*M$ and their odd symplectic structure. For distinction, \emph{odd thick morphisms} acting on odd functions are denoted $\Psi\co M_1\oto M_2$. The pullback $\Psi^*\co \pfunn(M_2)\to \pfunn(M_1)$ is constructed similarly with the above, by using an odd generating function $S=S(x,y^*)$ depending on source coordinates and target antimomenta.

Two fundamental properties of even and odd thick morphisms are relevant for us.

First, is their relation with homotopy Poisson structures. 

Let $M_1$ and $M_2$ be $S_{\infty}$-manifolds, with  master Hamiltonians  $H_i\in \fun(T^*M_i)$, $i=1,2$.
\begin{definition} A thick morphism $\Phi\co M_1\tto M_2$ is  an  \emph{$S_{\infty}$  thick morphism }
if $\pi_1^*H_1=\pi_2^*H_2$ on $\F$ regarded as a submanifold in $T^*M_2\times T^*M_1$.
\end{definition}
This is expressed by the Hamilton--Jacobi equation for $S(x,q)$
  \begin{equation}\label{eq.schoutenhj}
    H_1\Bigl(x,\der{S}{x}\Bigr)=H_2\Bigl(\der{S}{q},q\Bigr)\,.  
\end{equation}

\begin{theorem} \label{thm.sinfthick}
 If a thick  morphism of  $S_{\infty}$-manifolds $\Phi\co M_1\tto M_2$ is  {$S_{\infty}$}, then the pullback
\begin{equation*}
    \Phi^*\co \funn(M_2)\to \funn(M_1)
\end{equation*}
is an $L_{\infty}$-morphism of the homotopy Schouten brackets.
\end{theorem}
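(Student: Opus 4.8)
The plan is to prove the statement in the $Q$-manifestation of $\Linf$-morphisms: an $\Linf$-morphism between $\Linf$-algebras in the symmetric version is exactly a $Q$-map between the associated formal $Q$-manifolds. In our situation these $Q$-manifolds are the infinite-dimensional supermanifolds $\funn(M_2)$ and $\funn(M_1)$, each carrying the homological vector field that encodes its homotopy Schouten brackets, and the assertion becomes that $\Phi^*\co\funn(M_2)\to\funn(M_1)$ is a $Q$-map. So the first step is to write down the homological vector fields $Q_1$ and $Q_2$ generated by $H_1$ and $H_2$.

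First I would identify the $\Sinf$-structure on $\fun(M)$ given by a master Hamiltonian $H$ with a homological vector field $Q$ on $\funn(M)$. Summing the derived brackets \eqref{eq.sinfder} over all arities with the weights $1/n!$, one finds that the value of $Q$ at a point $f\in\funn(M)$ is the function $\sum_{n\geq 0}\tfrac{1}{n!}\{f,\ldots,f\}_H = H(x,\der{f}{x})$, which is just the Taylor expansion of $H$ in the momenta evaluated at $p=\der{f}{x}$. Since $H$ is odd, $Q$ is odd, and $Q^2=0$ is equivalent to the master equation $(H,H)=0$ (which holds by hypothesis). Thus $Q_i$ has value $H_i(x,\der{f}{x})$ on $\funn(M_i)$.

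The heart of the argument is the computation of the differential of the non-linear pullback $\Phi^*$. By the envelope (stationarity) property of the generating-function construction \eqref{eq.pullbackev}, two facts hold at every $g\in\funn(M_2)$. First, because $y$ and $q$ are determined as critical points of $g(y)+S(x,q)-y^iq_i$, the variational derivative of $\Phi^*$ at $g$ is the pullback by the \emph{ordinary} map $\phi_g$ sending $x$ to the solution $y(x)$ of the defining equations; this is the key feature quoted above, that the derivative of a thick pullback is a genuine algebra homomorphism. Second, and for the same reason, $\der{(\Phi^*[g])}{x^a}=\der{S}{x^a}(x,q)$, the explicit $x$-dependence through $S$ being the only surviving contribution. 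Establishing these two identities cleanly, with the correct signs, is the main technical obstacle, since everything downstream is a formal consequence.

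Finally I would assemble the $Q$-relatedness condition $d\Phi^*(Q_2)=Q_1\circ\Phi^*$. Evaluating the left-hand side at $g$ via the first fact gives $\phi_g^*\,H_2(y,\der{g}{y})=H_2(y(x),q(x))=H_2\bigl(\der{S}{q},q\bigr)$, where $y(x)=(-1)^{\itt}\der{S}{q}(x,q)$; evaluating the right-hand side via the second fact gives $H_1\bigl(x,\der{(\Phi^*[g])}{x}\bigr)=H_1\bigl(x,\der{S}{x}\bigr)$. Hence the condition that $\Phi^*$ be a $Q$-map is exactly
\begin{equation*}
    H_1\Bigl(x,\der{S}{x}\Bigr)=H_2\Bigl(\der{S}{q},q\Bigr),
\end{equation*}
which is the Hamilton--Jacobi equation \eqref{eq.schoutenhj} characterizing $S_\infty$ thick morphisms. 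Since $\Phi$ is assumed $S_\infty$, this equation holds, and therefore $\Phi^*$ is an $\Linf$-morphism of the homotopy Schouten brackets.
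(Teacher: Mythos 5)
Your proof is correct, and the paper itself contains no proof of Theorem~\ref{thm.sinfthick}: it is quoted from~\cite{tv:nonlinearpullback,tv:microformal}, where the argument is essentially the one you give. Namely, one encodes each homotopy Schouten structure as the formal homological vector field on $\funn(M_i)$ whose value at $f$ is $H_i\bigl(x,\der{f}{x}\bigr)$, uses the stationarity (envelope) property of the generating-function formula~\eqref{eq.pullbackev} to show both that the differential of $\Phi^*$ at $g$ is the ordinary pullback by $\phi_g$ and that $\der{(\Phi^*[g])}{x^a}=\der{S}{x^a}(x,q)$, and then observes that $Q$-relatedness of the two vector fields under $\Phi^*$ is exactly the Hamilton--Jacobi equation~\eqref{eq.schoutenhj}.
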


There is a parallel statement for $\Pinf$ odd thick morphisms defined similarly. 

Secondly, there is the notion of the adjoint for any non-linear map of vector spaces or vector bundles, if instead of ordinary maps we consider thick morphisms. (To achieve complete symmetry, this construction extends to thick morphisms themselves.) Namely,  if $\Phi\co E_1\to E_2$
is a  fiberwise (in general nonlinear) map of vector bundles,
then there is   a   thick morphism  called fiberwise \emph{adjoint} or \emph{dual}
\begin{equation*}
    \Phi^*\co E_2^*\tto E_1^*\,,
\end{equation*}
coinciding with    the usual adjoint map if $\Phi$ is fiberwise-linear, and with the same properties. It is defined as follows:
\begin{equation*}
    \Phi^*:=\bigl( \kir\times \kir)({\F} \bigr)^{\text{op}}\subset T^*E^*_1\times (-T^*E^*_2)\,,
\end{equation*}
where $\kir\co T^*E\to T^*E^*$ is the Mackenzie--Xu diffeomorphism. Practically, on the level of generating functions, it is an exchange up to signs of fiber coordinates and conjugate momenta for fiber coordinates. There is a parallel `odd adjoint' or `odd dual' construction. 
\begin{equation*}
    \Phi^{*\Pi}\co \Pi E_2^*\oto \Pi E_1^*\,,
\end{equation*}
coinciding with    the  usual adjoint combined with parity reversion if $\Phi$ is fiberwise-linear, and with the same properties. (Construction uses the odd analog of Mackenzie--Xu diffeomorphism found in~\cite{tv:graded}.)

Suppose we have $\Linf$-algebroids $E_1$ and $E_2$ over the same base $M$. An $\Linf$-morphism $\F\co E_1\specto E_2$ can be described as a fiberwise $Q$-map $\F\co \Pi E_1\to \Pi E_2$. Although it is not fiberwise-linear in general, for $\F$ we can consider its adjoint as a (fiberwise) thick morphism $\F\co \Pi E_2^*\to \Pi E_1^*$\,. The condition that $\F\co \Pi E_1\to \Pi E_2$ is a $Q$-map means that the homological vector fields $Q_1$ and $Q_2$ on $\Pi E_1$ and $\Pi E_2$ are $\F$-related. Hence $\F$-related are the corresponding linear Hamiltonians $H_{Q_1}$ and $H_{Q_2}$. Taking the adjoint in this language basically means applying the Mackenzie--Xu transformation to the  product of the cotangent bundles and the flip of factors. Hence the $\F$-related Hamiltonians transform into $\F^*$-related. It remains to note that the Mackenzie--Xu transformation of the original linear Hamiltonians gives exactly the master Hamiltonians of the Lie--Schouten brackets. Therefore we can apply Theorem~\ref{thm.sinfthick}. Thus we have arrived at the following statement:

\begin{theorem} 
An $\Linf$-morphism $\F\co E_1\specto E_2$ of $\Linf$-algebroids over a fixed base induces a $\Sinf$ thick morphism $\Pi E_2^*\tto \Pi E_1^*$ and an $\Linf$-morphism of Lie--Schouten brackets (as a non-linear map of functions $\F_*\co \funn(\Pi E_1^*)\to \funn(\Pi E_2^*)$).
\end{theorem}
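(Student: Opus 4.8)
The plan is to realize the morphism through its $Q$-manifestation and then dualize it by means of the adjoint construction for thick morphisms. First I would describe the given $\Linf$-morphism $\F\co E_1\specto E_2$ as a fiberwise $Q$-map $\F\co \Pi E_1\to \Pi E_2$; by the characterization recalled above, this is equivalent to saying that the homological vector fields $Q_1\in\Vect(\Pi E_1)$ and $Q_2\in\Vect(\Pi E_2)$ are $\F$-related. Although $\F$ is generally not fiberwise-linear, the adjoint (dual) construction for thick morphisms supplies an even thick morphism $\F^*\co \Pi E_2^*\tto \Pi E_1^*$, obtained as $\bigl((\kir\times\kir)(\F)\bigr)^{\mathrm{op}}$ from the Mackenzie--Xu diffeomorphism $\kir\co T^*(\Pi E)\to T^*(\Pi E^*)$ applied on both factors, followed by the flip. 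This is the candidate thick morphism appearing in the statement, bearing in mind the identification $(\Pi E)^*=\Pi E^*$.

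Next I would show that $\F^*$ is an $\Sinf$ thick morphism with respect to the Lie--Schouten brackets, that is, that its generating function satisfies the Hamilton--Jacobi equation~\eqref{eq.schoutenhj} relative to the two Lie--Schouten master Hamiltonians. The chain of reasoning is as follows. Lifting the $\F$-related fields $Q_1,Q_2$ to their linear Hamiltonians $H_{Q_1}$ and $H_{Q_2}$ on $T^*(\Pi E_1)$ and $T^*(\Pi E_2)$, one gets that $H_{Q_1}$ and $H_{Q_2}$ agree on the Lagrangian submanifold in $T^*(\Pi E_2)\times(-T^*(\Pi E_1))$ defining $\F$ as a cotangent-lifted relation; this is the standard fact that $\f$-relatedness of vector fields lifts to equality of the corresponding linear Hamiltonians on the graph relation. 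Since $\F^*$ is by construction the image of that relation under $\kir\times\kir$ (and the flip), this equality of Hamiltonians transports verbatim to the Lagrangian submanifold defining $\F^*$. It then remains to invoke the fact quoted earlier that $\kir$ carries each linear Hamiltonian $H_{Q_i}$ to precisely the master Hamiltonian of the Lie--Schouten bracket on $\Pi E_i^*$. Hence the transported equality is exactly the assertion $\pi_1^*H_1=\pi_2^*H_2$ on $\F^*$, which is~\eqref{eq.schoutenhj}.

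Once $\F^*$ is known to be $\Sinf$, Theorem~\ref{thm.sinfthick} applies directly: the pullback by $\F^*$, which I denote
\begin{equation*}
    \F_*\co \funn(\Pi E_1^*)\to \funn(\Pi E_2^*)
\end{equation*}
(a thick morphism acts on functions in the opposite direction), is an $\Linf$-morphism of the homotopy Schouten brackets, and these are exactly the Lie--Schouten brackets. This yields both assertions of the theorem at once.

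I expect the main obstacle to be the middle step, namely verifying rigorously that the adjoint construction $\bigl((\kir\times\kir)(\F)\bigr)^{\mathrm{op}}$ genuinely intertwines ``$\F$-relatedness of $Q_1,Q_2$'' with ``Hamilton--Jacobi equation for the Lie--Schouten Hamiltonians''. Two points need care: first, that $\kir$, applied factorwise to the product of cotangent bundles, sends the cotangent-lifted relation of the nonlinear map $\F$ to the canonical relation underlying $\F^*$, with all signs and the identification $(\Pi E)^*=\Pi E^*$ correctly accounted for; and second, that under $\kir$ the weight-graded linear Hamiltonian $H_{Q_i}$ passes to the full nonlinear master Hamiltonian encoding the entire sequence of Lie--Schouten brackets, not merely its quadratic part. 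Both reduce to bookkeeping with the explicit formulas for $\kir$ and for the pullback~\eqref{eq.pullbackev}, but it is precisely the sign management and the interplay of the weight grading with parity where errors are most likely to arise.
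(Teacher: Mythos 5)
Your proposal is correct and follows essentially the same route as the paper: realize the $\Linf$-morphism as a fiberwise $Q$-map, lift the $\F$-related homological vector fields to linear Hamiltonians agreeing on the graph relation, transport this via the Mackenzie--Xu transformation (which turns the linear Hamiltonians into the Lie--Schouten master Hamiltonians and the relation into the adjoint thick morphism), and conclude by Theorem~\ref{thm.sinfthick}. The paper's own argument is exactly this chain, stated in the same order just before the theorem.
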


There is an analog for homotopy Lie--Poisson brackets. 

Now we can fill the gaps in the \hyperlink{tab.manisec}{table} at the end of Section~\ref{sec.recall}. We obtain the following (where we write for the case of $\Linf$-algebroids).

\begin{center}
{\renewcommand{\arraystretch}{1.3}
\begin{tabular}{|c|c|} \hline
  % after \\ : \hline or \cline{col1-col2} \cline{col3-col4} ...
  %\rule{0pt}{18pt} An even Poisson manifold $M$ & An odd Poisson or Schouten manifold $M$  \\ \hline
  %\multicolumn{2}{|c|}{\rule{0pt}{21pt}$\displaystyle \{f,g\}_{P,H}= (-1)^{\at(\ft+1)}\der{f}{x^a}\{x^a,x^b\}_{P,H}\der{g}{x^b}$}\\
  \multicolumn{2}{|c|}{\rule{0pt}{15pt} Manifestations of $\Li$-algebroids }\\
  \hline
  \rule{0pt}{15pt} Objects: $E$ & Morphisms: $E_1\specto E_2$  (fixed base)\\ \hline
  $E$ with antisymmetric brackets   & $\La^nE_1\to E_2$ ($n=0,1,2,\ldots$ )   \\
$F=\Pi E$ with symmetric odd brackets  & $S^nF_1\to F_2$ ($n=0,1,2,\ldots$ ) \\
$F$ as formal $Q$-manifold &   $Q$-map $F_1\to F_2$\\
\rule{0pt}{15pt}$\Pi E^*=F^*$ as $\Sinf$-manifold & $\Sinf$ thick morphism $\Pi E_2^* \tto \Pi E_1^*$\\
  & induces $\Linf$-morphism $\funn(\Pi E_1^*) \to \funn(\Pi E_2^*)$\\
\rule{0pt}{15pt}$E^*=\Pi F^*$ as $\Pinf$-manifold & $\Pinf$ thick morphism $E_2^* \oto E_1^*$\\
  & induces $\Linf$-morphism $\pfunn(E_1^*) \to \pfunn(E_2^*)$\\
\rule{0pt}{-20pt} &  \rule{0pt}{-20pt}\\
   \hline
\end{tabular}
}
\end{center}

Now, to obtain the formulas in Theorem~\ref{thm.solu}, consider the anchor $a_P\co \Pi T^*M\to \Pi TM$ for  $T^*M$ given by   $(x^a,x^*_a)\mapsto (x^a, dx^a=(-1)^{\at}\lder{P}{x^*_a})$. Its generating function is
\begin{equation*}
    S=S(x,x^*; p, \pi)=x^ap_a + \der{P}{x^*_a}(x,x^*)\pi_a\,.
\end{equation*}
For the adjoint, we take the ``dual'' generating function:
\begin{equation*}
    S^*=S^*(x,x^*;p,\pi)=x^ap_a + \der{P}{x^*_a}(x,\pi)x^*_a\,.
\end{equation*}
Formulas~\eqref{eq.solution} and \eqref{eq.solution}   follow directly by applying the construction of pullback.

\begin{remark} It is possible also to include the whole construction of higher Koszul brackets (and considerations here) into a general framework of (quasi)triangular $\Linf$-bialgebroids. We hope to do that separately.  
\end{remark}

\appendix
\section{BV operator for higher Koszul brackets}

The higher Koszul brackets for a homotopy Poisson structure can be obtained from a BV-type operator on forms in a way similar to what it was done by Koszul in~\cite{koszul:crochet85}. (Thanks are due to Jim Stasheff for asking whether it is possible and to Yvette Kosmann-Schwarzbach for reminding Koszul's original construction.)  However, there are differences.

The main one is that, unlike the classical case where the generating operator is of second order and there is only one  bracket considered, which is binary, and it is its (quadratic) principal symbol, here we have to deal with an operator whose order is potentially not restricted (i.e. a formal differential operator), and the entirety of brackets (more precisely, their master Hamiltonian, which is inhomogeneous) should be interpreted as its ``principal symbol'' in the sense of $\hbar$-differential operators (which is different). This involves $\hbar$ as a parameter and taking the limit $\hbar \to 0$.

The details are as follows.

Let $P \in C^{\infty}(\Pi T^*M)$ be a ``Poisson tensor", i.e an even function satisfying $\lsch P,P\rsch=0$ (for the canonical Schouten bracket). In the ordinary manifold case, it is a sum of terms of various degrees (an inhomogeneous multivector field); in the supercase, it is an arbitrary smooth function $P=P(x,x^*)$, $x^*_a$ being the antimomenta (though only its expansion at $x^*=0$ matters). Then we set $\Delta$ acting on forms on $M$ (treated as functions $\omega=\omega(x,dx)$) to be
\begin{equation}
    \Delta := [d, \hat P],
\end{equation}
where for $P=P(x,x^*)$,
\begin{equation}
     \hat P := P(x, \frac{\hbar}{i} \der{}{dx})\,.
\end{equation}
(This is the analog of the interior product by a multivector. Basically, we interpret $P$ as a differential operator on forms, with differentiation only with respect to $dx^a$. Note the appearance of $\hbar$. One can conveniently express this operator by a Berezin integral.)

\begin{theorem}
For $\Delta$ as above, the higher Koszul brackets on forms  corresponding to a homotopy Poisson structure specified by $P$  are given by
\begin{equation}
    [\omega_1,...,\omega_k]_{P} = \lim_{\hbar\to 0}\, \bigl[...[\Delta,\omega_1],...,\omega_k\bigr] (1).
\end{equation}
\end{theorem}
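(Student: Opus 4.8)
The plan is to recognise the right-hand side as the system of \emph{higher derived brackets} of the odd operator $\Delta=[d,\hat P]$ acting on $\O(M)=\fun(\Pi TM)$, and to show that, order by order in $\hbar$, its leading term reproduces the higher Koszul brackets defined by~\eqref{koszul.eq.higherfunc}--\eqref{koszul.eq.higherdifdif}. Since the higher Koszul brackets are fixed by their values on the generators $f$ and $df$ together with the multiderivation (Leibniz) property, the proof splits into two parts: first, verifying that the limiting operation $(\omega_1,\dots,\omega_k)\mapsto \lim_{\hbar\to0}[\dots[\Delta,\omega_1],\dots,\omega_k](1)$ is a multiderivation in each argument; and second, evaluating it on the generators and matching with the iterated Schouten bracket $\{f_1,\dots,f_n\}_P=[\dots[P,f_1],\dots,f_n]|_M$.

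For the first part, I would analyse the $\hbar$-weight of the iterated commutator. Each factor $\tfrac{\hbar}{i}\der{}{dx^a}$ inside $\hat P$ carries one power of $\hbar$, while $d$ and the multiplication operators $\omega_i$ carry none; hence every contraction of a $dx$-derivative against an argument costs exactly one $\hbar$. A term in which a single argument $\omega_i$ is differentiated more than once therefore carries a strictly higher power of $\hbar$ than a term in which it is differentiated once, and so drops out in the leading-order limit. Consequently the surviving contribution is linear in the first derivatives of each $\omega_i$, i.e. a multiderivation, exactly as required to reduce everything to the generators $f$ and $df$.

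For the second part, I would compute the iterated commutator on the generators directly. Writing $\hat P$ as the contraction operator and using $d=dx^a\der{}{x^a}$, one checks that the purely $\hbar^0$ contributions are de~Rham exact and cancel in the graded commutator (this is already visible in the unary case, where $(dP_0)f\mp f\,dP_0=0$ and $(dP_0)(df)\mp(df)(dP_0)=0$), so that the leading term appears at $\hbar$-order equal to the arity. The coefficient of this leading term is assembled from successive single contractions $\der{P}{x^*_a}\der{}{x^a}$, one per argument, which is precisely one Schouten bracket $[-,f_i]$ per argument; collecting them gives $\{f_1,\dots,f_n\}_P$ when exactly one argument is an $f$ and the rest are $df$'s, and $d\{f_1,\dots,f_n\}_P$ when all arguments are $df$'s, the outer $d$ being produced by the surviving $d$ in $\Delta$ when no argument can absorb it and being spent on differentiating the lone function otherwise. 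This matches~\eqref{koszul.eq.higherfuncdif} and~\eqref{koszul.eq.higherdifdif}; the vanishing $[f_1,\dots,f_k]=0$ for $k\geq 2$ in~\eqref{koszul.eq.higherfunc} follows because two or more undifferentiated functions leave no $dx$-derivative for $\hat P$ to contract.

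The main obstacle is the precise $\hbar$-bookkeeping and the associated normalisation. Because the arity-$n$ bracket emerges at order $\hbar^{n}$, the limit in the statement must be read as the extraction of this leading coefficient (equivalently, one rescales by $(\hbar/i)^{-n}$); proving that no lower-order terms survive, that the normalisation is uniform across arities, and that the Koszul signs $(-1)^{\e}$ of~\eqref{koszul.eq.higherfuncdif}--\eqref{koszul.eq.higherdifdif} emerge correctly, is the delicate point. A clean way to organise this is to pass to the $\hbar$-symbol calculus in the fibre variables $\bigl(dx^a,\ x^*_a=\tfrac{\hbar}{i}\der{}{dx^a}\bigr)$, where $\hat P$ has symbol $P(x,x^*)$ and the leading term of $\tfrac{i}{\hbar}[\hat A,\hat B]$ is the fibrewise Poisson bracket of symbols; the iterated commutator then visibly limits to the iterated Schouten bracket, and the already-established equivalence of the generalized Jacobi identities with $\lsch P,P\rsch=0$ guarantees that the resulting operations do form the higher Koszul $\Linf$-structure.
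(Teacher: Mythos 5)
You have correctly identified the framework (the right-hand side is the system of classical brackets generated by the $\hbar$-differential operator $\Delta$), and your remark that the limit must be read as extracting the coefficient of $(\hbar/i)^k$ — the arity-$k$ operation appears at order $\hbar^k$, so a literal limit would vanish for $k\geq 1$ — is correct; this normalisation has to be built into the ``classical brackets'' cited from \cite[\S 5]{tv:microformal}, since otherwise the statement would be empty. But your route differs from the paper's, and its central step has a genuine gap. The paper verifies no multiderivation property and evaluates nothing on generators: it computes, in one stroke, the $\hbar$-principal symbol of $\Delta$ by conjugation with $e^{\frac{i}{\hbar}(xp+dx\,\pi)}$ on $T^*(\Pi TM)$ (momenta $p_a,\pi_a$ conjugate to \emph{both} $x^a$ and $dx^a$), obtains in the limit
\begin{equation*}
dx^a\,\der{P}{x^a}(x,\pi)+(-1)^{\at}\der{P}{\pi_a}(x,\pi)\,p_a\,,
\end{equation*}
recognises this as the master Hamiltonian $K$ of the higher Koszul brackets found in \cite{tv:higherpoisson}, and then quotes the general theorem of \cite[\S 5]{tv:microformal} that the classical brackets of an $\hbar$-differential operator are the higher derived brackets of its principal symbol. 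All sign and normalisation issues are settled by this single identification of Hamiltonians.

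The gap is in the $\hbar$-counting of your first part. You assign cost $\hbar$ to every $dx$-derivative and cost zero to $d$, and conclude that any term differentiating one argument more than once is suppressed. Precisely because the derivative in $d$ is $\hbar$-free, this does not follow: an argument hit once by $d$ and once by a $dx$-derivative costs the same single power of $\hbar$ as an argument hit only once; worse, a term in which one argument absorbs only the $d$-derivative while the remaining $k-1$ arguments absorb one $dx$-derivative each has order $\hbar^{k-1}$ and would \emph{dominate} the order-$\hbar^k$ terms you want to keep, making the rescaled limit divergent rather than equal to the Koszul bracket. These dangerous terms do disappear, but not by weight counting: they cancel because $\Delta$ is the \emph{commutator} $[d,\hat P]$ (already in the unary case the two copies of $P_0\,df$, one from $d(\hat P f)$ and one from $\hat P(df)$, cancel against each other), and such cancellations must be established at every order $\hbar^j$ with $j<k$, not only at order $\hbar^0$ as in your sketch. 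The clean repair is to expand the commutator once and for all: commuting $d$ past $\hat P$ trades one $dx$-derivative for an $x$-derivative while keeping its factor $\hbar/i$, so in the expanded $\Delta$ \emph{every} derivative carries exactly one $\hbar/i$; after that your counting, the multiderivation property, and the matching on generators all go through. Equivalently — and this is what the paper does — one runs the $\hbar$-symbol calculus in all variables $(x^a,dx^a)$ at once; note that your proposed fibrewise calculus in $(dx^a,x^*_a)$ alone cannot work, since $d$ is not a fibrewise operator and its own symbol $\frac{i}{\hbar}\,dx^ap_a$ is singular as $\hbar\to 0$, only the commutator having a regular limit.
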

(These are precisely the ``classical brackets" generated by an operator $\Delta$ as defined in~\cite[\S 5]{tv:microformal}.)
\begin{proof}
We need to show that the odd master Hamiltonian for the sequence of higher Koszul brackets is the principal symbol of the operator $\Delta$. For that we use formula (5.11) for the principal symbol from~\cite{tv:microformal}
(written for functions on $\Pi TM$)
\begin{equation*}
    H(x,dx,p,\pi) = \lim_{\hbar\to 0} e^{- \frac{i}{\hbar} (xp + dx \pi)} \Delta (e^{\frac{i}{\hbar} (xp + dx \pi)})
\end{equation*}
(analogous to the well familiar formula in the theory of pseudodifferential operators). Here $x^a, dx^a$ are local coordinates on $\Pi TM$ and $p_a,\pi_a$ denote the conjugate momenta. We directly calculate the action of the commutator $[d, \hat P]$ on the exponential. At one stage, expressing the action of the operator $\hat P$ is helpful. At the end, we obtain (before taking the limit):
\begin{equation*}
    e^{- \frac{i}{\hbar} (xp + dx \pi)} \Delta (e^{\frac{i}{\hbar} (xp + dx \pi)})= dx^a \der{P}{x^a}(x,\pi) + (-1)^{\tilde a} \der{P}{\pi_a}p_a
+ \frac{\hbar}{i}P(x,\pi) dx^a p_a\,.
\end{equation*}
After passing to the limit $\hbar\to 0$, we arrive at the Hamiltonian
\begin{equation*}
    dx^a \der{P}{x^a}(x,\pi) + (-1)^{\tilde a} \der{P}{\pi_a}p_a\,,
\end{equation*}
which exactly coincides with the master Hamiltonian $K$ for the higher Koszul brackets found in~\cite{tv:higherpoisson}
(formula (24) in Sect. 4 there). This completes the proof.
\end{proof}

%\bibliographystyle{plain}
%\bibliography{/home/ted_voronov/Desktop/Work/Local_TeX_Files/bibtex/bib/misc/geometry} %% Office UNIX
%\bibliography{E:/Ted/LocalT\string~1/bibtex/bib/misc/geometry} %% Laptop Windows
%\end{document}

  \def\cprime{$'$} \def\cprime{$'$}

\end{document}